\theoremstyle{plain}
\newtheorem*{theorem*}{Theorem}
\newtheorem*{lemma*} {Lemma}
\newtheorem*{corollary*} {Corollary}
\newtheorem*{proposition*}{Proposition}
\newtheorem*{conjecture*}{Conjecture}
\newtheorem{theorem}{Theorem}[section]
\newtheorem{lemma}[theorem]{Lemma}
\newtheorem*{theorem1*}{Theorem 1}
\newtheorem*{theorem2*}{Theorem 2}
\newtheorem*{theorem3*}{Theorem 3}
\newtheorem{corollary}[theorem]{Corollary}
\newtheorem{proposition}[theorem]{Proposition}
\newtheorem{question}[theorem]{Question}
\theoremstyle{remark}
\newtheorem*{remark}{Remark}
\newtheorem*{remarks}{Remarks}
\newtheorem*{example*}{Example}
\newtheorem*{claim}{Claim}
\theoremstyle{definition}
\def\op{\operatorname}
\def\G{\Gamma}
 \def\Q{\mathbb{Q}}  \def\Z{\mathbb{Z}} \def\R{\mathbb{R}}  
  \def\b{\beta}  \def\bp{\begin{pmatrix}}
 \def\ep{\end{pmatrix}} \def\bn{\begin{enumerate}} 
   \def\en{\end{enumerate}}
\def\ba{\begin{array}} \def\ea{\end{array}}  
  \def\b{\beta}  
  \def\Ker{\operatorname{Ker}}
\def\ker{\op{Ker}}\def\be{\begin{equation}} \def\ee{\end{equation}}
\def\An{\op{Aut}(F_n)} \def\A2{\op{Aut}(F_2)}  
\def\On{\op{Out}(F_n)} \def\O2{\op{Out}(F_2)}
 \def\SA2{\op{Aut^{+}}(F_2)} 
\def\IA2{\op{Inn}(F_2)} 
 \def\SO2{\op{Out^{+}}(F_2)}
\def\St{\op{Stab}(H)} \def\Sy4{\op{S}_4}
 \def\aut{\mbox{Aut}}
\def\op{\operatorname}
\def\what{\widehat}
\def\fin{\unlhd_f}
\def\s{\sigma_} \def\u{\sigma^{-1}_}
\begin{document}

\title[Higher incoherence of the automorphism groups of a free group]{Higher incoherence of the automorphism groups of a free group}

\author{Stefano Vidussi}
\address{Department of Mathematics, University of California, Riverside, CA 92521, USA} \email{svidussi@ucr.edu}
\date{\today}

\begin{abstract} Let $F_n$ be the free group on $n \geq 2$ generators. We show that for all $1 \leq m \leq 2n-3$ (respectively for all $1 \leq m \leq 2n-4$) there exists a subgroup of $\An$ (respectively $\On$) which has finiteness of type $F_{m}$ but not of type $FP_{m+1}(\Q)$, hence it is not $m$--coherent.
In both cases, the new result is the upper bound $m= 2n-3$ (respectively $m = 2n-4$), as it cannot be obtained by embedding direct products of free noncyclic groups, and certifies higher incoherence up to the virtual cohomological dimension and is therefore sharp. As a tool of the proof, we discuss the existence and nature of multiple inequivalent extensions of a suitable finite-index subgroup $K_4$ of $\A2$ (isomorphic to the quotient of the pure braid group on four strands by its center): the fiber of four of these extensions arise from the strand-forgetting maps on the braid groups, while a fifth is related with the Cardano--Ferrari epimorphism.

\end{abstract}

\maketitle

\section{Introduction}

A group $G$ is called {\em coherent} if all its finitely generated subgroups are finitely presented. Following \cite{KV23}, we say that a group is $m$--coherent if all its subgroups of type $F_m$ are of type $F_{m+1}$. If $H \leq G$ is a group of type $F_m$ that fails to be of type $F_{m+1}$, we say that $H$ is a {\em witness} to $m$-incoherence. Free groups, surface groups, and $3$--manifold groups give examples of groups that are $m$--coherent for all $m$, but it is a classical result that the product of two free non-cyclic groups, e.g. $F_2 \times F_2$, is not $1$-coherent (i.e. coherent). Iterating this fact, one can prove that the direct product $(F_2)^n$ of $n$ copies of $F_2$ fails to be $m$--coherent for all $1 \leq m \leq n-1$. The group $(F_2)^n$ is an example of {\em poly-free} group of length $n$, namely it admits a subnormal filtration of length $n$ whose successive quotients are finitely generated free groups, and the (in)coherence result mentioned above extends, under appropriate circumstances, to that class of groups, see \cite[Theorem 1.7 and Corollary 1.10]{KV23}. Remarkably, in all those examples, one can exhibit as witness to $m$--incoherence an {\em algebraic fiber}, i.e. a normal finitely generated subgroup which appears as kernel of a discrete character $\varphi \colon G \to \Z$, referred to as an {\em algebraic fibration}. More generally, given a group $G$, one could hope such a phenomenon to hold at least virtually, i.e. for a finite index subgroup of $G$. In fact, this is the startegy employed in \cite{KW22} to show that $\A2$ is incoherent. (A previous proof of this fact, based on a different approach, appears in \cite{G04}.) But such a strategy is doomed to fail for other groups, e.g. $\An$ or $\On$, respectively the automorphism and the outer automorphism groups of the free group $F_n$, for $n \geq 4$, as these groups do not admit, even virtually, algebraic fibrations as their virtual first Betti numbers vanish. A different approach to prove $m$-incoherence of a group $G$ is to try to embed in $G$ a group which is itself $m$-incoherent, and this is what we will endeavor to do here for $\An$ and $\On$. For the first, it is known that it admits as  subgroup the direct product $(F_2)^{2n-3} \leq \An$. This implies that for all $1 \leq m \leq 2n-4$, $\An$ is $m$-incoherent. Furthermore, by \cite[Theorem 6.1]{HW20}, $(F_2)^{2n-3}$ is the ``largest" (in terms of factors) direct product of free noncyclic groups contained in $\An$ for $n \geq 2$, so one cannot use other direct products to improve on this result. In a similar vein,  again by \cite[Theorem 6.1]{HW20}, the ``largest" direct products of free noncyclic groups contained in $\On$ for $n \geq 3$ are subgroups of the form $(F_2)^{2n-4} \leq  \On$, which entails that for all $1 \leq m \leq 2n-5$, $\On$ is $m$-incoherent. We will top the upper bounds by using a convenient embedding of non-product poly-free groups in $\An$ and $\On$ discussed in \cite{BKK02} and show the following:
\begin{restatable}{theorem}{mainthm}\label{thm:main} For all $n \geq 3$, there exists a (non-direct product) poly-free subgroup $F_n \rtimes (F_2^{2n-4} \rtimes \mathbb{F}_2) \leq \An$ of length $2n-2$ (respectively $F_2^{2n-4} \rtimes \mathbb{F}_2 \leq \On$ of length $2n-3$) which admit a virtual algebraic fiber of type $F_{2n-3}$ but not of type $FP_{2n-2}(\Q)$ (respectively of type $F_{2n-4}$ but not of type $FP_{2n-3}(\Q)$). In particular, $\An$ is not $(2n-3)$-coherent (respectively $\On$ is not $(2n-4)$-coherent). 
\end{restatable}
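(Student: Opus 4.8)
The plan is to combine an explicit non-product poly-free embedding, due to \cite{BKK02}, with the algebraic-fibration technology for poly-free groups of \cite[Theorem 1.7 and Corollary 1.10]{KV23}, the point being to produce a poly-free subgroup whose length equals the virtual cohomological dimension of the ambient group, so that the Bieri--Stallings phenomenon yields a finiteness gap one step beyond the reach of direct products of free groups. Concretely, following \cite{BKK02} I would first exhibit inside $\An$ a subgroup $P_n = F_n \rtimes Q_n$, with $F_n = \op{Inn}(F_n)$ at the bottom of the tower and $Q_n \cong F_2^{2n-4}\rtimes\F_2$; it is built as an iterated free-group extension whose combinatorial input is a finite-index subgroup $K_4 \le \A2$ (with $K_4 \cong P_4/Z(P_4)$, itself poly-free of length $2$) together with its five inequivalent extensions --- four coming from the strand-forgetting maps on the braid groups and the fifth from the Cardano--Ferrari epimorphism --- which dictate how successive free fibers can be attached so that the resulting tower lands in $\An$. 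Projecting along $\An \onto \On$ kills $\op{Inn}(F_n)$ and leaves $Q_n \le \On$, poly-free of length $2n-3$; so $P_n$ is poly-free of length $2n-2$ with noncyclic free fibers (one copy of $F_n$ and then $2n-3$ copies of $F_2$), and $\op{cd}(P_n)=2n-2=\op{vcd}(\An)$, $\op{cd}(Q_n)=2n-3=\op{vcd}(\On)$.

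Next I would produce the virtual algebraic fibration. The monodromy of the tower acts on the abelianisations of the successive free fibers, and from the explicit description of the extensions of $K_4$ --- in particular that the monodromy of the Cardano--Ferrari and strand-forgetting extensions acts on homology through a finite group --- one can pass to a finite-index subgroup $P_n' \le P_n$ (resp.\ $Q_n' \le Q_n$) on which this action is trivial on $\Q$-homology. On $P_n'$ (resp.\ $Q_n'$) there is then enough first Betti number to choose a ``diagonal'' character $\varphi \co P_n' \onto \Z$ (resp.\ $Q_n' \onto \Z$) whose restriction to each free fiber of the filtration is nonzero; this $\varphi$ is the desired fibration, and the passage to finite index is the only reason it is merely virtual.

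Finally I would feed $\varphi$ into \cite[Theorem 1.7 and Corollary 1.10]{KV23}: for a poly-free group of length $\ell$ with noncyclic fibers, a character that is non-degenerate with respect to the filtration and whose monodromy is trivial on $\Q$-homology has kernel of type $F_{\ell-1}$ but not of type $FP_{\ell}(\Q)$ --- in terms of the Bieri--Renz $\Sigma$-invariants, $[\pm\varphi]\in\Sigma^{\ell-1}$ while $[\varphi]\notin\Sigma^{\ell}$ with $\Q$-coefficients, so that $H_{\ell}(\ker\varphi;\Q)$ is infinite-dimensional. Taking $\ell = 2n-2$ for $P_n'$ and $\ell = 2n-3$ for $Q_n'$ gives an algebraic fiber of type $F_{2n-3}$ but not $FP_{2n-2}(\Q)$ inside $\An$, and one of type $F_{2n-4}$ but not $FP_{2n-3}(\Q)$ inside $\On$. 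Since a group of type $F_{\ell-1}$ that fails $FP_{\ell}(\Q)$ is in particular not of type $F_{\ell}$, such a fiber witnesses $(2n-3)$-incoherence of $\An$ and $(2n-4)$-incoherence of $\On$; this is optimal, since a group of virtual cohomological dimension $d$ is automatically $d$-coherent (a subgroup of type $F_d$ has $\op{cd}\le d$, hence is $FP_{\infty}$ and therefore $F_{\infty}$).

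The routine step is the last one, granting \cite{KV23}. The crux is the construction in the first two steps: realising a \emph{non-product} poly-free tower of length exactly the virtual cohomological dimension inside $\An$, and understanding its monodromy well enough to trivialise it on homology after passing to finite index. This is precisely why the five extensions of $K_4$ must be analysed: the four strand-forgetting extensions alone cannot lengthen the tower past the direct-product bound of \cite{HW20}, and it is the fifth --- the Cardano--Ferrari extension --- together with the verification that its fiber is a noncyclic free group and its monodromy is homologically finite, that yields the sharp bound $m = 2n-3$ (resp.\ $m = 2n-4$).
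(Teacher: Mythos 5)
Your overall architecture --- the Bestvina--Kapovich--Kleiner embedding, the fibration machinery of \cite{KV23}, and an Euler-characteristic obstruction to $FP_{\ell}(\Q)$ --- matches the paper's, but two of your key steps do not work as stated. First, you propose to pass to a finite-index subgroup of $P_n$ on which the monodromy acts trivially on the $\Q$-homology of the free fibers. This is impossible: in the BKK tower the group $\mathbb{F}_2$ is a lift of the Sanov subgroup of $SL_2(\Z)$, acting on each $F_2$-fiber with homological image exactly that infinite subgroup; by the transfer, $H_1(F_2;\Q)$ is a direct summand of $H_1(F_2';\Q)$ for any invariant finite-index $F_2' \fin F_2$, so no finite-index passage can trivialize the action. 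What is actually available --- and what the paper establishes by quoting the explicit Kropholler--Walsh computation --- is the much weaker statement of \emph{virtually excessive homology}: a finite-index free-by-free subgroup $\what{K}_4 = J \rtimes \mathbb{F}_2$ of $\SA2$ has nonzero coinvariants $H_1(J;\Z)_{\mathbb{F}_2}$, yielding one invariant class $\phi \in H^1(J;\Z)^{\mathbb{F}_2}$, which is then spread diagonally over the $2n-4$ copies of $J$ and extended via \cite[Theorems 1.1 and 1.7]{KV23}. Your proposal has no substitute for this computation, which is the crux of the argument. Relatedly, the hypothesis of \cite[Theorem 1.7]{KV23} is excessive homology of \emph{each} successive quotient of the filtration (not trivial monodromy); for the $\An$ case the bottom sequence $1 \to F_n \to F_n \rtimes(F_2^{2n-4}\rtimes\mathbb{F}_2) \to \cdots \to 1$ needs its own verification (the paper computes that the coinvariants of $H_1(F_n;\Z)$ have rank $n-2$), which you do not address. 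Also, the failure of $FP_{\ell}(\Q)$ is not part of the \cite{KV23} package: it is derived separately from the nonvanishing of the top $L^2$-Betti number, equivalently the Euler characteristic, of the poly-free group, via \cite{GN21} and \cite{IMP21}.

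Second, the five inequivalent $F_3$-by-$F_2$ extensions of $K_4$ and the Cardano--Ferrari epimorphism play no role in the proof of this theorem; in particular the fifth extension is not what yields the sharp bound. Those extensions are the content of the paper's separate Theorem 1.2, a structural result about $K_4$ proved afterwards. The tower $F_n \rtimes (F_2^{2n-4}\rtimes\mathbb{F}_2)$ is built purely from the diagonal BKK action $x_i \mapsto l_i^{-1} x_i r_i$ together with an action of $\mathbb{F}_2\leq\SA2$ on $\langle a,b\rangle$; what pushes the finiteness type one step past the direct-product bound of \cite{HW20} is the combination of a genuinely non-product poly-free tower of length equal to the vcd with the excessive-homology fibration theorem of \cite{KV23}, not any choice among the extensions of $K_4$.
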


To the best of our knowledge, this result is new, and puts $\An$ and $\On$, as far as incoherence goes, at par with the direct product of free groups with the same virtual cohomological dimension. For instance, when $n=3$, this means that $\op{Out}(F_3)$ is not $2$--coherent. (The case  of coherence is already dispatched by an embedding of $F_2 \times F_2$, but, as observed before, $F_2 \times F_2 \times F_2 \nleq \op{Out}(F_3)$.)
Note that the result of Theorem \ref{thm:main} is sharp in the following sense: as the virtual cohomological dimension of $\An$ and $\On$ are respectively $2n-2$ and $2n- 3$ (see \cite{CV86,V02}), a subgroup with finiteness type $F_m$ up to the virtual cohomological dimension would have type $FP_{\infty}$, hence $F_{\infty}$ (see \cite[Proposition 6.1]{Br94}). Similar results of higher incoherence hold also for the Torelli subgroups of $\An$ and $\On$, building on \cite{GN21}, see Section \ref{sec:poly}.

Interestingly enough, Theorem \ref{thm:main} is a consequence of the fact that $\A2$ virtually algebraically fibers, a fact first established in \cite[Section 6]{KW22} and that essentially accounts to the case $n=2$ of Theorem \ref{thm:main}. ($\O2 \cong GL_2(\Z)$, hence it is virtually free.) We will show some alternative approaches to that fact, which will lead us to some related results on finite--index subgroups of $\A2$. 

Recall that by \cite{DFG82} the group $\SA2$ of special automorphisms of $F_2$, an index-$2$ subgroup of $\A2$, can described as the quotient $B_4/Z(B_4)$ of the braid group on $4$ strands by its center, hence it contains a normal finite index subgroup $K_4 \fin \A2$ that can be defined as the quotient $P_4/Z(P_4)$ of the braid group on $4$ strands by its center (see {\em Proof 2} of Theorem \ref{thm:kw} for details). As such, $K_4$ inherits $4$ inequivalent extensions as $F_3$-by-$F_2$ group from the strand-forgetting epimorphisms $\Theta_i \colon P_4 \to P_3$. At first sight, it may seems unlikely that $K_4$ admits other extensions. However we have the following. 

\begin{restatable}{theorem}{secthm}\label{thm:sec} Let $K_4  \leq_f \A2$ be the subgroup of index $48$ in $\A2$ defined as $K_4 = P_4/Z(P_4)$; then $K_4$ admits $5$ inequivalent extensions as an $F_3$-by-$F_2$ group, namely the fibers of these extensions are distinct normal subgroups of $K_4$; only $4$ of them arise from strand-forgetting maps on $P_4$. 
\end{restatable}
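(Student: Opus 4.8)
The plan is to exhibit five inequivalent $F_3$-by-$F_2$ extensions of $K_4$ by hand and then to rule out any others by a Grassmannian intersection count, in which $5$ appears as the degree of $\mathrm{Gr}(2,5)$ in its Pl\"ucker embedding. The four strand-forgetting extensions come for free: the Fadell--Neuwirth epimorphisms $\Theta_i\colon P_4\onto P_3$ $(i=1,\dots,4)$ have kernel $\cong F_3$ and send the full twist $\zeta_4$ (generating $Z(P_4)$) to the full twist $\zeta_3$ (generating $Z(P_3)$), so that $\Theta_i^{-1}(Z(P_3))=\ker\Theta_i\times\langle\zeta_4\rangle\cong F_3\times\Z$; composing with $P_3=F_2\times\langle\zeta_3\rangle\onto F_2$ and passing to $K_4=P_4/\langle\zeta_4\rangle$ gives $\bar\Theta_i\colon K_4\onto F_2$ with kernel $N_i=\Theta_i^{-1}(Z(P_3))/\langle\zeta_4\rangle\cong F_3$. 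The inequivalence is visible on homology: the images $\overline{N_i}\subseteq H_1(K_4;\Q)\cong\Q^5$ are the spans of the classes $A_{jk}$ with $i\in\{j,k\}$ taken modulo $\sum_{j<k}A_{jk}$, and these four $3$-planes are pairwise distinct, so $N_1,\dots,N_4$ are pairwise distinct.

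For the fifth extension I would start from the exceptional (``Cardano--Ferrari'') epimorphism $\phi\colon B_4\onto B_3$, $\sigma_1,\sigma_3\mapsto\sigma_1$, $\sigma_2\mapsto\sigma_2$, which covers the quotient $S_4\onto S_3$ with kernel the Klein four-group $V_4$; here $\ker\phi\cong F_2$ has image exactly $V_4$ in $S_4$, and a short computation ($\phi$ sends $\Delta_4=(\sigma_1\sigma_2\sigma_3)(\sigma_1\sigma_2)\sigma_1$ to $\Delta_3^{2}$) gives $\phi(\zeta_4)=\zeta_3^{\,2}$. The restriction $\Psi:=\phi|_{P_4}\colon P_4\onto P_3$ is onto with $\ker\Psi=\ker\phi\cap P_4\cong F_5$ (index $4$ in $\ker\phi$). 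Put $\widetilde M:=\Psi^{-1}(Z(P_3))\trianglelefteq P_4$, so $P_4/\widetilde M\cong P_3/Z(P_3)\cong F_2$ and $\zeta_4\in\widetilde M$; set $N_5:=\widetilde M/\langle\zeta_4\rangle\trianglelefteq K_4$, with quotient $F_2$. Now $N_5\leq K_4$ is torsion-free and contains the image of $\ker\Psi\cong F_5$ with index $2$ (since $\Psi(\zeta_4)=\zeta_3^{\,2}$ is twice a generator of $Z(P_3)$); being torsion-free and virtually free it is free, and $\chi(N_5)=\tfrac12\chi(F_5)=-2$ forces $N_5\cong F_3$. Reading $\Psi$ on $H_1$ (from the $V_4$-combinatorics together with $\Psi(\zeta_4)=\zeta_3^2$), the plane $\overline{N_5}$ has the self-dual shape ``$c_{12}=c_{34},\ c_{13}=c_{14},\ c_{23}=c_{24}$'' and differs from every $\overline{N_i}$, so $N_1,\dots,N_5$ are five genuinely distinct $F_3$-by-$F_2$ fibers, only the first four of them arising from strand-forgetting.

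To see there are no others, attach to an $F_3$-by-$F_2$ fiber $N$, with classifying epimorphism $\pi_N\colon K_4\onto F_2$, the subspace $U_N:=\operatorname{im}\big(\pi_N^{*}\colon H^1(F_2;\Q)\to H^1(K_4;\Q)\big)$; it is independent of the choice of $\pi_N$ (the ambiguity is by $\Aut(F_2)$, which fixes the span), is $2$-dimensional, and is \emph{isotropic} for the cup product $H^1(K_4;\Q)^{\otimes 2}\to H^2(K_4;\Q)$, since that product factors through $H^2(F_2;\Q)=0$. Thus $\Lambda^2U_N$ is a decomposable line in $\mathcal K:=\ker\big(\Lambda^2H^1(K_4;\Q)\xrightarrow{\ \cup\ }H^2(K_4;\Q)\big)$, i.e.\ a point of $\mathrm{Gr}(2,5)\cap\P(\mathcal K)$. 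Since $b_1(K_4)=5$, $\operatorname{cd}K_4=2$ and $\chi(K_4)=\chi(F_3)\chi(F_2)=2$ one gets $b_2(K_4)=6$, and the cup product is \emph{surjective}: in the presentation $K_4=F_3\rtimes F_2$ the monodromy acts trivially on $H_1(F_3;\Q)$ (pure braids act trivially on the first homology of a punctured surface), so $H^2(K_4;\Q)\cong H^1(F_2;H^1(F_3;\Q))$ and the $H^1(F_2)\otimes H^1(F_3)$-component of $\cup$ is already an isomorphism onto it. Hence $\mathcal K$ is $4$-dimensional, $\P(\mathcal K)\cong\P^3$, and---checking that $\mathcal K$ contains no pencil of decomposable $2$-vectors---the intersection $\mathrm{Gr}(2,5)\cap\P^3$ is zero-dimensional and reduced, of length $\deg\mathrm{Gr}(2,5)=5$; as $U_{N_1},\dots,U_{N_5}$ are five distinct points of it, they are all of them. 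Finally $N\mapsto U_N$ is injective: if $U_N=U_{N'}$ then the kernel of $(\pi_{N*},\pi_{N'*})$ on $H_1(\,\cdot\,;\Q)$ is $3$-dimensional, so the image of the diagonal $K_4\to F_2\times F_2$ has abelianized image the graph of an isomorphism $F_2^{\mathrm{ab}}\to F_2^{\mathrm{ab}}$; then the structure theory of subgroups of $F_2\times F_2$, together with the rigidity that a finitely generated normal subgroup of a non-cyclic free group has index equal to the ratio of Euler characteristics (which also forbids $F_3\times F_3\leq K_4$ at index $2$, since that would force $b_1(K_4)=3$), forces that image to be the graph of an automorphism of $F_2$, i.e.\ $N=N'$. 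So there are exactly five fibers, $N_1,\dots,N_5$.

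The genuinely delicate points are the two steps quoted above without detail: that $\mathrm{Gr}(2,5)\cap\P(\mathcal K)$ is reduced and $0$-dimensional---so that its length is exactly $5$, rather than a count with multiplicities or a positive-dimensional family slipping in---and that distinct fibers produce distinct isotropic planes. Both require an explicit description of $\mathcal K$ from the pure-braid monodromy and repeated use of Euler-characteristic rigidity for free subgroups of $K_4$; this is where the work, rather than the formalism, resides.
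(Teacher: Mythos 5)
Your first two paragraphs do establish the theorem, by a route that overlaps substantially with the paper's: the four strand-forgetting fibers $N_i=\ker\Theta_i\cdot Z(P_4)/Z(P_4)$ and a fifth fiber obtained from the Cardano--Ferrari epimorphism $\Psi\colon P_4\to P_3$. (The paper reaches the same subgroup $\Pi$ via the Birman sequence for $K_4\cong\operatorname{PMod}(S_{0,5})$ based at the distinguished puncture and only afterwards identifies it with the Cardano--Ferrari kernel, in Lemmas \ref{lem:newf3} and \ref{lem:fifth}; your Euler-characteristic argument that $N_5\cong F_3$ because it contains $\ker\Psi\cong F_5$ with index $2$ is the paper's Lemma \ref{lem:newf3} with Bieri's theorem replaced by Stallings--Swan.) Where you genuinely differ is in proving distinctness: you compare the images of the fibers in $H_1(K_4;\Q)\cong\Q^5$, which is clean and correct for $N_1,\dots,N_4$, though your description of $\overline{N_5}$ by the equations $c_{12}=c_{34}$, $c_{13}=c_{14}$, $c_{23}=c_{24}$ needs justification (the image of $H_1(N_5)$ is a priori only contained in, not equal to, the kernel of the induced map on $H_1$). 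The paper instead proves the stronger statement that the Frobenius product of any two of the five fibers is all of $K_4$. Either way the five fibers are distinct normal subgroups, which is all the theorem asserts.

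Your third paragraph claims something strictly stronger than the theorem: that there are \emph{no} further $F_3$-by-$F_2$ structures. The paper does not prove this and records it as an open question immediately after its proof. The Grassmannian strategy is attractive, but as written it has a genuine gap exactly where you concede ``the work resides'': you must show that $\mathrm{Gr}(2,5)\cap\P(\mathcal K)$ is zero-dimensional, and this is not a formality. For instance, if the cup-product pairing on $H^1(K_4;\Q)$ had a radical vector $v$ --- a scenario that is numerically consistent with surjectivity onto $H^2(K_4;\Q)\cong\Q^6$, since $\dim\Lambda^2(\Q^5/\langle v\rangle)=6$ --- then $\mathcal K$ would contain the entire $\P^3$ of decomposables of the form $v\wedge(\cdot)$, the intersection would be positive-dimensional, and the degree count would give nothing. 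Ruling this out, and more generally ruling out any pencil of decomposables in $\mathcal K$, requires an explicit computation of the cup product from a presentation of $K_4$ that you have not carried out. Until it is supplied, the third paragraph adds nothing to the first two; if it can be supplied, you will have answered the question the paper leaves open, which should prompt a very careful second look at that step.
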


In Section \ref{sec:mul} we will argue that, in fact, the interesting part of Theorem \ref{thm:sec} is not the existence, but the origin of the fifth extension, as it relates to an ``unusual" (i.e. not a strand-forgetting) epimorphism $\Psi \colon P_4 \to P_3$ with kernel $F_5$, restriction of the so-called Cardano--Ferrari epimorphism. 

Furthermore, the fibers of these extensions can be described in terms of kernels of Birman's sequences associated with distinct identifications of $K_4$ with (pure) mapping class groups of punctured surfaces, and in particular they are related through the identification of $K_4$ with the pure mapping class group of a five-puntured sphere.

\subsection*{Acknowledgment.} The author would like to thank Macarena Arenas and Rob Kropholler for pointing out that the results of Theorem \ref{thm:main} are sharp, and Ric Wade for several mail exchanges that elucidated many properties of $\A2$ and significantly impacted the content of Section \ref{sec:mul}. Finally, we would like to thank the anonymous referee for their thorough review and suggestions, which greatly improved our presentation.

\section{Virtual fibrations of $\A2$} \label{sec:virt}

The strategy of the proof of Theorem \ref{thm:main} stems from the existence, for all $n \geq 3$, of poly-free groups of lengths $2n-2$ and $2n-3$ that embed as (infinite-index) subgroups of $\An$ and $\On$ respectively and for which we will prove the existence of virtually algebraic fibers with suitable finiteness property. We will be able to construct these groups by leveraging on the analog property for $\A2$.

The group $\A2$ can be written as an extension \begin{equation} \label{eq:autf2} 1 \longrightarrow \operatorname{Inn}(F_2) \longrightarrow \A2 \stackrel{\psi}{\longrightarrow} \O2 \longrightarrow 1 \end{equation} where we can identify the subgroup of inner automorphisms $\operatorname{Inn}(F_2)$ with $F_2$, as the latter has trivial center.
Note that $\O2 \cong GL_2(\Z)$. In particular, the natural epimorphism $\psi \colon \A2 \to \O2$ plays simultaneously the role of homological monodromy map, so that its kernel $F_2$ equals the Torelli subgroup $\operatorname{IA}(F_2)$. It is slightly more manageable to work with the  special automorphisms group $\SA2$, an index $2$ subgroup of $\A2$ containing $\operatorname{Inn}(F_2)$, and the corresponding special outer automorphism group; in particular $\SO2 \cong SL_2(\Z)$. (In fact, a simple way to define $\SA2$ is as $\psi^{-1}(SL_2(\Z))$.) It is well--known that $SL_2(\Z)$ is virtually free, hence in particular $vb_1(\SA2) = \infty$. What is not {\em a priori} obvious is the fact that the sequence in Equation (\ref{eq:autf2}) has virtually excessive homology, namely that there exists a finite index subgroup of $\A2$ with the property that the rank of the homology is strictly greater than that of the base of the resulting extension. This fact was first established in \cite[Section 6]{KW22} and we will outline their proof, adding some details that will be useful in what follows, and give another ({\em recte}, one and a half) somewhat different proof of this fact which has more geometric flavor. 
\begin{theorem} (Kropholler--Walsh) \label{thm:kw} The extension \[ 1 \longrightarrow \operatorname{Inn}(F_2) \longrightarrow \A2 \stackrel{\psi}{\longrightarrow} \O2 \longrightarrow 1\] has virtually ecessive homology.  \end{theorem}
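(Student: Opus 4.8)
The plan is to exhibit a finite-index subgroup of $\SA2$ together with an explicit integral character whose kernel is finitely generated, while showing simultaneously that the first Betti number of the total space exceeds that of the base of the induced extension. Concretely, I would pass to a suitable finite-index subgroup $\Gamma \fin \SA2$ sitting in an extension $1 \to F_2 \to \Gamma \to \Gamma_0 \to 1$, where $\Gamma_0 \fin SL_2(\Z)$ is a congruence (or otherwise chosen) subgroup that is free of rank $r$, so that $b_1(\Gamma_0) = r$. The point is to select $\Gamma$ so that the action of $\Gamma_0$ on $H_1(F_2;\Q) = \Q^2$ — which is just the restriction of the standard $SL_2(\Z)$-representation — has a large invariant or coinvariant space once we also allow ourselves to precompose with a finite cover; then the five-term exact sequence in homology for the extension, namely $H_2(\Gamma_0;\Q) \to (H_1(F_2;\Q))_{\Gamma_0} \to H_1(\Gamma;\Q) \to H_1(\Gamma_0;\Q) \to 0$, forces $b_1(\Gamma) > b_1(\Gamma_0)$. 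This is exactly the assertion that the sequence has virtually excessive homology.

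The mechanism producing excess homology is the following: for a free group $\Gamma_0$ of rank $r \ge 2$ the term $H_2(\Gamma_0;\Q)$ vanishes, so the five-term sequence degenerates to a short exact sequence $0 \to (\Q^2)_{\Gamma_0} \to H_1(\Gamma;\Q) \to \Q^r \to 0$, and hence $b_1(\Gamma) = r + \dim (\Q^2)_{\Gamma_0}$, which strictly exceeds $r = b_1(\Gamma_0)$ as soon as the coinvariants $(\Q^2)_{\Gamma_0}$ are nonzero. So the entire content is to find a finite-index free subgroup $\Gamma_0 \le SL_2(\Z)$ (equivalently a finite-index torsion-free subgroup) whose action on $\Q^2$ has nontrivial coinvariants — equivalently, by duality, nontrivial invariants. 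For instance any $\Gamma_0$ contained in a congruence subgroup $\Gamma(N)$ with $N \ge 3$ fixes no nonzero vector in $\Q^2$ on the nose, so one must instead arrange the invariants by a slightly more careful choice: one takes $\Gamma_0$ to be the preimage under reduction mod $N$ of a unipotent subgroup, e.g. $\Gamma_0 \le \left\{ \bigl(\begin{smallmatrix} 1 & * \\ 0 & 1\end{smallmatrix}\bigr) \bmod N \right\}$ intersected with a torsion-free subgroup, so that $\Gamma_0$ does fix the line spanned by $(1,0)$ and its coinvariants are likewise one-dimensional. Then $b_1(\Gamma) \ge r+1 > r$.

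With virtually excessive homology in hand, producing an actual algebraic fibration with finitely generated kernel is the step where one invokes the Bieri–Neumann–Strebel machinery (or the Kropholler–Walsh argument directly): excess homology over $\Q$ combined with the fact that $F_2$ is finitely generated and that $\Gamma$ acts on the (one- or higher-dimensional) first homology of the fiber with a suitably "large" image places an open rational point of the BNS invariant $\Sigma^1(\Gamma)$ in the complement of the support of the monodromy, so a generic primitive integral class $\chi \colon \Gamma \to \Z$ in that region has finitely generated kernel; since $\Gamma$ is already finitely presented (being poly-free of finite length) one upgrades $F_1$ to $FP_1$ routinely. I would then record, as in \emph{Proof 2} alluded to in the excerpt, the identification $\SA2 \cong B_4/Z(B_4)$ and its finite-index subgroup $K_4 = P_4/Z(P_4)$, and note that the extension $1 \to F_2 \to K_4 \to F_? \to 1$ coming from a strand-forgetting map realizes concretely the abstract $\Gamma$ above, so that the geometric "one and a half" proof amounts to checking the coinvariant computation on the explicit $P_3$-action. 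The main obstacle I anticipate is precisely the coinvariant/invariant computation: one has to certify that the chosen finite-index subgroup of $SL_2(\Z)$ genuinely has nonzero fixed (co)vectors in the standard representation — a congruence subgroup alone will not do, and getting this to cohere with the torsion-freeness and normality constraints needed to realize $\Gamma$ inside $\SA2$ (and ultimately inside $\An$, $\On$) is the delicate bookkeeping the proof must carry out.
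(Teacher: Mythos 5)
There is a genuine gap at the heart of your argument: the coinvariants $(\Q^2)_{\Gamma_0}$ vanish for \emph{every} finite-index subgroup $\Gamma_0 \leq SL_2(\Z)$, so no choice of $\Gamma_0$ can make your mechanism fire as long as the fiber stays equal to $F_2$. Indeed, $\Gamma_0$ contains positive powers $T^k$ and $(T^{\mathsf{t}})^m$ of the two standard unipotents, and already $T^k e_2 - e_2 = k e_1$ and $(T^{\mathsf{t}})^m e_1 - e_1 = m e_2$ span $\Q^2$, killing the coinvariants. Your proposed fix --- taking $\Gamma_0$ inside the preimage mod $N$ of a unipotent subgroup --- does not help: a matrix congruent to $\bigl(\begin{smallmatrix} 1 & * \\ 0 & 1 \end{smallmatrix}\bigr)$ mod $N$ fixes $(1,0)$ only modulo $N$, not in $\Q^2$, and the general vanishing above applies to it as well. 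You correctly flagged this computation as the main obstacle, but the resolution you sketch cannot succeed; this is exactly why the paper calls the result not \emph{a priori} obvious.

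The missing idea, used in both of the paper's proofs, is to pass to a finite-index subgroup that meets the fiber $\operatorname{Inn}(F_2) \cong F_2$ in a \emph{proper} finite-index subgroup, thereby replacing the two-dimensional standard representation by a higher-dimensional monodromy representation that can (and does) have nonzero coinvariants. In Proof~1 the fiber is cut down to the index-$2$ subgroup $H \cong F_3$ (kernel of $F_2 \to \Z_2$), a congruence subgroup $\G^{+}(\Z_2,\pi)$ of $\SA2$ is identified by explicit generators, and a direct computation shows $H_1(H;\Z)_{T(H)}$ has rank at least one. In Proof~2 the fiber is cut down to $J \cong F_5$ (kernel of $F_2 \to V_4$) inside $K_4 = P_4/Z(P_4)$, and excessive homology is read off from $b_1(K_4) = 5 > 2 = b_1(\G(2))$. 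Your five-term-sequence skeleton (with $H_2$ of a free base vanishing) is fine and is indeed what both proofs rest on, but without shrinking the fiber the coinvariant term you need is identically zero.
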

\begin{proof}[Proof 1]
We will prove this result by showing that it holds for the induced extension $1 \to \operatorname{Inn}(F_2) \to \SA2 \to SL_2(\Z) \to 1$ of the special automorphism groups $\SA2$. 

Consider the (special) automorphisms of $F_2 = \langle a,b  \rangle $ defined as follows. 
\begin{align*} 
\lambda \colon  & a \mapsto ab & \rho \colon & a \mapsto a   \\ 
& b \mapsto b & & b \mapsto ba     
\end{align*}

Let $\pi \colon F_2 \to \Z_2$ be the epimorphism given by $\pi(a) = 0; \,\ \pi(b) = 1$ and denote $H \fin F_2$ be the kernel of $\pi$. It is straightforward to verify that $H$ is a free group on the three generators $x := a, \,\ y := b^2, \,\ z := b a b^{-1}$. In general, given a finite quotient $\pi \colon F_2 \to Q$ with kernel $R \fin F_2$, we can consider the stabilizer \[ \op{Stab}(R) := \{f \in \A2 \, | \,  f(R) = R \} \leq_f \A2 \] and the {\em standard congruence subgroup} \[ \G(Q,\pi) := \{f \in \A2 \, |\, \pi \circ f = \pi \} \fin \op{Stab}(R) \] which can be defined as kernel of the induced map $\op{Stab}(R) \to \op{Aut}(Q)$. When $\ker{\pi} = R$ is characteristic, $\op{Stab}(R) \cong \A2$ and we refer to $\G(Q,\pi)$ as a {\em principal congruence subgroup}. We will denote \[ \G^{+}(Q,\pi) := \G(Q,\pi) \cap \SA2\] and refer to this group as standard congruence subgroup as well. It is quite straightforward to see that $\G^{+}(Q,\pi)$ is finite index in $\SA2$.  

Determining explicitly $\op{Stab}(\ker{\pi})$ or 
$\G^{+}(Q,\pi)$ is however rarely easy. In \cite[Proposition 6.2]{GL09} the authors determine that, for any epimorphism $\pi \colon F_2 \to \Z_2$, the index of $\G^{+}(\Z_2,\pi) = \op{Stab}(R) \cap \SA2 \leq_f \SA2$ in the special automorphism group $\SA2$ is $3$. (Recall that $\op{Aut}(\Z_2)$ is trivial.) In order to determine $\G^{+}(\Z_2,\pi) \leq_f \SA2$ to the extent we need, we will proceed as follows. Start by considering the four elements of $\SA2$ defined as 
\begin{equation} \label{eq:fourgen}  \lambda^2, \rho, \lambda^{-1} \rho^2 \lambda, \lambda^{-1} \rho^{-1} \lambda \rho \lambda \in \SA2 \end{equation} (Here we follow the convention that composition of automorphism goes from the left to the right, e.g. $\lambda \rho (a) = \rho(ab) = aba$, so our notation is the transpose of that of \cite{KW22}.) By explicit calculation, one can verify that these elements are contained in $\St$. Next, consider their images under $\psi  \colon \SA2 \to \SO2 \cong SL_2(\Z)$. Using an explicit presentation of the latter group it is not too difficult to verify, using GAP, that their image generates a subgroup $S \leq_f SL_2(\Z)$ of index $3$. The theory of presentation of group extensions guarantees then that the collection of automorphisms of $F_2$ given by the four elements in Equation (\ref{eq:fourgen}) plus the generators given by the inner automorphisms $i_a,i_b \in \op{Inn}(F_2) \unlhd \SA2$ generate a subgroup of index $3$ in $\SA2$ which is an extension of $S$ by $F_2$;  as $H \fin F_2$, inner automorphisms of $F_2$ stabilize $H$, i.e. $\operatorname{Inn}(F_2) \leq \St$. It follows that this subgroup of index $3$ must actually coincide with $\G^{+}(\Z_2,\pi)$, so we have 
\[ 1 \longrightarrow F_2 \longrightarrow \G^{+}(\Z_2,\pi) \longrightarrow S \longrightarrow 1 \] 
Next, we define a subgroup $S(H)$ of finite index in $\G^{+}(\Z_2,\pi)$ which is an extension with fiber $H$ and base a subgroup of $S$: phrased otherwise, we are trying to find a subgroup $S(H) \leq_f \G^{+}(\Z_2,\pi)$ such that $S(H) \cap F_2 = H$. Using GAP, it turns out that the epimorphism $\pi \colon F_2 \to \Z_2$ does extend to an epimorphism (that we denote with the same symbol) $\pi \colon \G^{+}(\Z_2,\pi) \to \Z_2$ so we can define $S(H) := \ker{\pi} \fin \G^{+}(\Z_2,\pi)$, which is therefore an extension of $S$ by $H$; however, it is not clear to the author how to prove this directly. We can however obtain the result that we need using a more general approach, that provides us furthermore with one of the tools that we will need in the rest of this paper.
As $H \unlhd \G^{+}(\Z_2,\pi)$,  the quotient map $p \colon \G^{+}(\Z_2,\pi) \to \G^{+}(\Z_2,\pi)/H$ fits in the commmutative diagram of short exact sequences 
\begin{equation} \label{eq:diag} \xymatrix@R0.5cm{
& 1 \ar[d] &
1 \ar[d] & &\\
1 \ar[r] & H \ar[d] \ar[r] &
H \ar[d] \ar[r]& 1 \ar[d] & \\
1\ar[r]&
F_2 \ar[r]\ar[d]^{\pi}&
\G^{+}(\Z_2,\pi)\ar[d]^p \ar[r]& S \ar[d] \ar[r]&1\\
 1\ar[r]& \Z_2 \ar[d]
 \ar[r]& \G^{+}(\Z_2,\pi)/H \ar[r] \ar[d]  &
S \ar[r] \ar[d] &1 \\
& 1 & 1 & 1} 
\end{equation}
We focus on the bottom short exact sequence: we want to show that such extension is virtually a product. (The aforementioned GAP calculation amounts to say that it is actually a product.) We can observe that this extension is central (the group $\G^{+}(\Z_2,\pi)/H$ acts by conjugation on $\Z_2$, mapping onto $\operatorname{Aut}(\Z_2)$, but as the latter group is trivial, the conjugation action is trivial). Isomorphism classes of central extensions of $\Z_2$ by $S$ are classified by $H^2(S;\Z_2)$. Now, $S$ is virtually free, because so is $SL_2(\Z)$, hence there exists a free finite index subgroup $T \leq_f S$ for which $H^2(T;\Z_2) = 0$ and the pull-back of $\G^{+}(\Z_2,\pi)/H \to S$ to $T$ is a product, namely $\G^{+}(\Z_2,\pi)/H$ is virtually $\Z_2 \times T$. We have a commutative diagram
\[ \xymatrix@=10pt{ 
 1\ar[rr] & & F_2 \ar[rr]\ar'[d][dd] \ar[dr] & & p^{-1}(\Z_2 \times T) \ar'[d][dd]\ar[dr]  \ar[rr] & & T \ar[rr]\ar[dr]\ar'[d][dd] & & 1
\\ & 1\ar[rr]  &  & F_2 \ar[dd] \ar[rr] & & \G^{+}(\Z_2,\pi) \ar[rr]\ar[dd] &  & S \ar[rr]\ar[dd] & & 1
\\ 1 \ar[rr] &  & \Z_2 \ar'[r][rr]\ar[dr] & & \Z_2 \times T \ar'[r][rr]\ar[dr] & & T \ar'[r][rr] \ar[dr] & & 1
\\ & 1 \ar[rr] & & \Z_2 \ar[rr] & & \G^{+}(\Z_2,\pi)/H \ar[rr] & & S \ar[rr] & & 1 } \] where all diagonal maps are finite-index subgroup inclusions. Now the epimorphism $\pi \colon F_2 \to \Z_2$ extends to $\pi \colon p^{-1}(\Z_2 \times T) \to \Z_2$ via the projection map from $\Z_2 \times T$ to the first factor. Denote the kernel of this map by $T(H)$: we have $T(H) \leq_f p^{-1}(\Z_2 \times T) \leq_f \G^{+}(\Z_2,\pi)$, and as $F_2 \cap T(H) = H$, it is an extension of $T$ by $H$, namely
\begin{equation} \label{fig:ext} 1 \longrightarrow H \longrightarrow T(H) \longrightarrow T \longrightarrow 1 \end{equation} (This sequence splits, as $T$ is free.) We address the problem of showing that the sequence in Equation (\ref{fig:ext}) has excessive homology. A generating set of this group is given by the generators $i_x,i_y,i_z$ of $H \cong  \op{Inn}(H)$, which act trivially on $H_1(H;\Z)$ and a number of elements that lift a generating set of $T$. The action on $H_1(H;\Z)$ of these elements factors through the action of the subgroup of $\G^{+}(\Z_2,\pi)$ generated by the four generators listed in Equation (\ref{eq:fourgen}), now interpreted as elements of $\operatorname{Aut}(H)$. (The choice of the lift to $T(H)$ is immaterial, again because $H$ acts trivially on its homology.) The key point is that we can determine explicitly how these four generators act on $H$ and in particular how they act on $H_1(H;\Z)$. Explicit calculations (see \cite{KW22} for details) show that the coinvariant homology of $H$ under the action of the subgroup generated by the element in Equation (\ref{eq:fourgen}) has rank one, and this entail that $H_1(H;\Z)_{T(H)}$ has rank at least one. 
\end{proof}

\begin{proof}[Proof 2] The second proof is based on the existence, first pointed out in \cite{DFG82} and elucidated geometrically in \cite[Section 6]{BW24}, of an ``accidental" isomorphism $B_4/Z(B_4) \cong \SA2$, where $B_4$ is the braid group on $4$ strands. As well-known, the group $B_4$ admits the Artin presentation \begin{equation} 
\label{eq:presb4} B_4 \cong \langle \sigma_1,\sigma_2, \sigma_3|\sigma_1 \sigma_2 \sigma_1 = \sigma_2 \sigma_1 \sigma_2, \sigma_2 \sigma_3 \sigma_2 = \sigma_3 \sigma_2 \sigma_3, \sigma_1 \sigma_3 = \sigma_3 \sigma_1 \rangle \end{equation} The braid group contains a normal free group of rank two \[ F_2 \cong \langle a = \sigma_1 \sigma_3^{-1},b = \sigma_2 \sigma_1 \sigma_3^{-1}\sigma_2^{-1}\rangle \unlhd B_4\]  (see \cite{DFG82,GoLi69}), and $B_4$ acts by conjugation on $F_2$, inducing an homomorphism $B_4 \to \A2$. Obviously the action restricts trivially to the center $Z(B_4) = \langle (\sigma_1 \sigma_2 \sigma_3)^4 \rangle \cong \Z$, so the homomorphism $B_4 \to \A2$ descends to $B_4/Z(B_4)$. In \cite{DFG82} the authors show that the induced homomorphism is in fact injective and its image coincides with $\SA2$, so that  $B_4/Z(B_4) \cong \SA2$. Furthermore, the image of $F_2 \unlhd B_4$ (which has trivial intersection with $Z(B_4)$) in $\SA2$ is the subgroup $\operatorname{Inn}(F_2)$, as the conjugation action of $B_4$ on its $F_2$-subgroup restricts to the action of $F_2$ on itself by inner automorphisms. We will henceforth work with the presentation of $\SA2$ afforded by its isomorphism with $B_4/Z(B_4) $, which comes from the presentation of $B_4$ in Equation (\ref{eq:presb4}) by adding the center as relator:
\begin{equation} \label{eq:presaut} \SA2 \cong \langle \sigma_1,\sigma_2, \sigma_3|\sigma_1 \sigma_2 \sigma_1 = \sigma_2 \sigma_1 \sigma_2, \sigma_2 \sigma_3 \sigma_2 = \sigma_3 \sigma_2 \sigma_3, \sigma_1 \sigma_3 = \sigma_3 \sigma_1,  (\sigma_1 \sigma_2 \sigma_3)^4  \rangle \end{equation} Consider now the epimorphism $\Xi \colon B_4 \to \Sy4$ determined by the permutation of the strands (whose kernel is the pure braid group $P_4$); $\Xi$ descends as well to $B_4/Z(B_4)$, as $\Sy4$ has trivial center (or, if preferred, because one can observe that elements in $Z(B_4)$ are pure braids). The existence of the isomorphism $B_4/Z(B_4) \cong \SA2$ entails therefore that the latter has an (otherwise non-obvious) epimorphism $\xi \colon \SA2 \to \Sy4$. Considering that the center $Z(B_4)$ is contained in the pure braid subgroup, and it actually coincides with its center $Z(P_4)$, the kernel of this epimorphism is $K_4:= P_4/Z(P_4)$. (Note that it is well--known that $P_4 \cong K_4 \times Z(P_4)$, see \cite[Section 1.9.3]{FM12} or \cite[Corollary 3.6]{Lin04}.)  
Thanks to the identification $\operatorname{Inn}(F_2) \cong \langle \sigma_1 \sigma_3^{-1},\sigma_2 \sigma_1 \sigma_3^{-1}\sigma_2^{-1}\rangle$, we can explicitly identify $\xi(\IA2) = \Xi(F_2) \fin \Sy4$ just by determining how the generators permute strands: 
\[ \Xi(\sigma_1 \sigma_3^{-1}) = (1,2)(3,4) \in  \Sy4, \,\ \Xi(\sigma_2 \sigma_1 \sigma_3^{-1}\sigma_2^{-1}) = (1,3)(2,4) \in  \Sy4, \] and these are generators of the normal Klein subgroup $\xi(\IA2)  = \Xi(F_2) = \op{V}_4 \unlhd \Sy4$. We will denote $J := \ker\xi \cong F_5$. We thus have the commutative diagram
\begin{equation} \label{eq:pure} \xymatrix@R0.5cm{
& 1 \ar[d] &
1 \ar[d] & 1 \ar[d] &\\
1 \ar[r] & J \ar[d] \ar[r] &
K_4 \ar[d] \ar[r]^{\psi}& \G(2) \ar[d] \ar[r] & 1 \\
1\ar[r]&
\IA2 \ar[r]\ar[d]^{\xi}&
\SA2 \ar[d]^{\xi} \ar[r]^{\psi} & SL_2(\Z) \ar[d] \ar[r]&1\\
 1\ar[r]& \op{V}_4 \ar[d]
 \ar[r]& \Sy4 \ar[r] \ar[d]  &
\op{S}_3 \ar[r] \ar[d] &1 \\
& 1 & 1 & 1} 
\end{equation}
where we use the fact that the quotient $\Sy4/\op{V}_4 = \op{S}_3$.
The nature (and the notation) of the kernel $\G(2)$ of the epimorphism $SL_2(\Z)$ onto $\op{S}_3$ is explained in the following:
\begin{claim} Let $\chi \colon SL_2(\Z) \to S_3$ be an epimorphism: then the kernel $\ker \chi$ is the level-2 principal congruence subgroup \[ \G(2) := \left\{ \begin{pmatrix} a & b \\ c & c \end{pmatrix} \in SL_2(\Z), \begin{pmatrix} a & b \\ c & d \end{pmatrix} = \begin{pmatrix} 1 & 0 \\ 0 & 1 \end{pmatrix} \text{mod $2$} \right\}. \]
\end{claim}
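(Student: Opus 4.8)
The plan is to show that $SL_2(\Z)$ has, up to the action of $\Aut(S_3)$, a unique surjection onto $S_3$, and that its kernel is the level-$2$ congruence subgroup $\G(2)$. First I would recall the standard presentation $SL_2(\Z) \cong \langle S, T \mid S^4 = 1, (ST)^6 = 1, S^2 = (ST)^3 \rangle$, or equivalently the amalgam structure $SL_2(\Z) \cong \Z/4 *_{\Z/2} \Z/6$; in any case the abelianization is $\Z/12$, generated by the image of $ST$. Since $S_3$ is not abelian, any epimorphism $\chi\co SL_2(\Z) \to S_3$ is nontrivial on the commutator subgroup. The key structural input is that $SL_2(\Z)$ modulo its center $\{\pm I\}$ is $PSL_2(\Z) \cong \Z/2 * \Z/3$, and that $-I$ is the unique element of order $2$ in the center; one then checks that $-I$ must lie in $\ker\chi$ (its image is central in $S_3$, and $Z(S_3) = 1$), so $\chi$ factors through $PSL_2(\Z) \cong \Z/2 * \Z/3 \onto S_3$, which is visibly the ``obvious'' map sending the generators of the two free factors to a transposition and a $3$-cycle respectively; by the universal property of the free product this is essentially the only such surjection.

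Next I would pin down the kernel. The reduction-mod-$2$ map $SL_2(\Z) \to SL_2(\Z/2) \cong S_3$ is surjective (a classical fact, e.g. it follows from the fact that the elementary matrices generate and reduce onto the generators of $SL_2(\F_2)$), and by definition its kernel is $\G(2)$. Since we just argued the epimorphism onto $S_3$ is unique up to post-composition with an automorphism of $S_3$, and automorphisms of $S_3$ fix the kernel, it follows that $\ker\chi = \G(2)$ regardless of which epimorphism $\chi$ was chosen. (One can double-check the index: $[SL_2(\Z):\G(2)] = |SL_2(\F_2)| = 6 = |S_3|$, consistent with $\chi$ being onto.)

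Finally I would identify $\G(2)$ explicitly as the matrices congruent to the identity mod $2$, which is immediate from the definition of the reduction map, matching the displayed formula in the claim (correcting the evident typo, the condition being $\left(\begin{smallmatrix} a & b \\ c & d \end{smallmatrix}\right) \equiv \left(\begin{smallmatrix} 1 & 0 \\ 0 & 1 \end{smallmatrix}\right) \bmod 2$). For the record, in the diagram of Equation~(\ref{eq:pure}) the relevant epimorphism $SL_2(\Z) \to S_3$ is the one induced by $\xi$, i.e. the action of $\SA2/\IA2 \cong SL_2(\Z)$ on $\Sy4/\op{V}_4 \cong S_3$ obtained from the conjugation action of $\Sy4$ on the normal Klein four-group, and since $SL_2(\F_2) \cong \Sy4/\op{V}_4$ canonically this is exactly the mod-$2$ reduction.

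The main obstacle I anticipate is not any single hard step but making the ``uniqueness of the epimorphism'' argument airtight: one must be careful that $-I$ is forced into the kernel (which is where $Z(S_3)=1$ is used) and that the resulting surjection $PSL_2(\Z) = \Z/2 * \Z/3 \onto S_3$ really is unique up to $\Aut(S_3)$ — this uses that in $S_3$ all elements of order $2$ are conjugate and all subgroups of order $3$ coincide, so the images of the two free factors are determined up to conjugacy and inversion, hence up to an automorphism. Everything else (surjectivity of mod-$2$ reduction, the index count, the explicit matrix description) is standard.
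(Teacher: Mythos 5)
Your proof is correct, and it shares the paper's two-part skeleton --- (i) the epimorphism $SL_2(\Z) \onto S_3$ is unique up to an automorphism of $S_3$, hence has a well-defined kernel, and (ii) the mod-$2$ reduction $SL_2(\Z) \to SL_2(\Z_2) \cong S_3$ realizes that kernel as $\G(2)$ --- but your route to the uniqueness statement (i) is genuinely different. The paper outsources it: it cites Artin's classical theorem that $B_3$ admits a unique epimorphism onto $S_3$ up to automorphism, and notes that this property descends to any quotient of $B_3$, in particular to $SL_2(\Z)$ via the standard quotient map. You instead prove uniqueness directly and self-containedly: you force $-I$ into the kernel using $Z(S_3)=1$ (the image of a central element under a surjection is central), pass to $PSL_2(\Z) \cong \Z/2 \ast \Z/3$, and classify surjections of the free product onto $S_3$ by elementary conjugacy considerations (all transpositions conjugate, a unique subgroup of order $3$, all automorphisms of $S_3$ inner). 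Your version is more elementary and arguably more natural here, since the Claim concerns $SL_2(\Z)$ alone and needs no braid-group input; the paper's appeal to Artin fits its larger narrative, where the analogous uniqueness of epimorphisms $B_4 \to S_4$ and $B_4 \to S_3$ is invoked again later. Both arguments are complete, and you correctly flag the typo in the displayed matrix ($c$ where $d$ is meant).
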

\begin{proof}[Proof of Claim] 
It is known from \cite[Theorem 1]{A47} (see also \cite[Section 1.7]{Lin04}) that there exists a unique epimorphism from the braid group on $3$ strands $B_3$ to $\op{S}_3$, up to an automorphism of the latter. This implies that the same property holds for epimorphisms from any quotient of $B_3$, in particular $SL_2(\Z)$ (see \cite[Section 1.3.6.4]{FM12} for the construction of a quotient map). Because of this property, any epimorphism $SL_2(\Z) \to \op{S}_3$ has the same kernel. Now $\op{S}_3 \cong SL_2(\Z_2)$, and the \textit{mod $2$} reduction map from $SL_2(\Z)$ to $SL_2(\Z_2)$ is an epimorphism, whose kernel is the {\em principal congruence subgroup} $\G(2)$, which is well-known to be isomorphic to $F_2 \times \Z_2$.
\end{proof}  
(In fact we will show directly later on that $\G(2) \cong F_2 \times \Z_2$.) The Betti numbers of the pure braid groups are well--known, and out of that one can compute, in particular, that $b_1(K_4) = 5$. As $b_1(K_4) > b_1(\G(2)) = 2$ we conclude, perhaps unceremoniously, that the sequence in the first row of Equation (\ref{eq:pure}) has excessive homology. \end{proof}

\begin{remarks}
\begin{enumerate}
\item As observed in \cite[Section 1.9.2]{FM12}, $B_4/Z(B_4)$ can be identified with a subgroup of the mapping class group $\op{Mod}(S_{0,5})$ of the sphere with $5$ punctures; with this interpretation, $K_4$ is the pure mapping class group $\op{PMod}(S_{0,5})$. This will play a role in Section \ref{sec:mul}. In a similar vein, $B_4/Z(B_4)$ admits an identification with the pure mapping class group $\op{PMod}(S_{1,2})$ of the twice-puntured torus (see \cite[Proposition 6.1]{BW24}). It would probably be a nice exercise to recast another geometric proof of Theorem \ref{thm:kw} using this isomorphism. 
\item It is worth observing that $B_4$ is unique among braid groups insofar as it has, besides the ``natural" permutation epimorphism $\Xi \colon B_4 \to \Sy4$, two other epimorphisms onto $\Sy4$ which are not related by automorphisms of $\Sy4$ (see \cite[Theorem 1]{A47}, \cite[Sections 2 and 3]{MN22}). Again, these induce two epimorphisms $\SA2 \to \Sy4$, and one can verify, using GAP and techniques similar to those described above, that the sequences induced on the kernel of these epimorphisms as in Equation (\ref{eq:pure}) do not have excessive homology. (This observation has no bearing in what follow, so we omit the details.)
\end{enumerate}
\end{remarks}

We want to add some depth to the information about the the group $K_4 \fin \SA2$, and relate it with other groups of relevance. We start by relating $K_4$ with a principal congruence subgroup of $\SA2$.
\begin{proposition} \label{prop:stabaut} Let $\xi \colon F_2 \cong \op{Inn}(F_2) \to \op{V_4}$ be the epimorphism of Equation (\ref{eq:pure})); then the (principal) congruence subgroup $\G^{+}(\op{V_4},\xi) \fin \SA2$ is given by the subgroup $\langle K_4, a,b \rangle$, where $a,b \in F_2$ act by conjugation.
\end{proposition}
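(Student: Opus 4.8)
The plan is to show that $\G^{+}(\op{V}_4,\xi)$ coincides with the preimage $\xi^{-1}(\op{V}_4)$ of the normal Klein subgroup $\op{V}_4 \unlhd \Sy4$ under the epimorphism $\xi\colon \SA2 \onto \Sy4$ of Equation~(\ref{eq:pure}) (the one whose restriction to $\IA2 \cong F_2$ is the proposition's map $\xi\colon F_2 \onto \op{V}_4$), and then to identify $\xi^{-1}(\op{V}_4)$ with $\langle K_4, a, b\rangle$. First I would note that $R := \ker(\xi\colon F_2 \to \op{V}_4)$ equals $[F_2,F_2]F_2^{2}$, the kernel of the largest elementary abelian $2$-quotient of $F_2$, hence is characteristic; consequently $\op{Stab}(R) = \A2$, so $\G(\op{V}_4,\xi)$ is genuinely a principal congruence subgroup, namely the kernel of the action homomorphism $\alpha\colon \A2 \to \Aut(F_2/R) \cong \Aut(\op{V}_4)$, and $\G^{+}(\op{V}_4,\xi) = \ker\alpha \cap \SA2 = \ker(\alpha|_{\SA2})$.

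The heart of the argument is to recognize $\alpha|_{\SA2}$ concretely. I would transport the action to $\op{V}_4$ through the isomorphism $\bar\xi\colon F_2/R \xrightarrow{\ \sim\ } \op{V}_4$ induced by $\xi$. For $f \in \SA2$ and $w \in F_2$ the standard identity $f\, i_w\, f^{-1} = i_{f(w)}$ holds in $\A2$; applying $\xi$ and using that $\xi|_{\IA2}$ is exactly the proposition's map (so $\xi(i_w) = \bar\xi(wR)$), this becomes
\[ \xi(f)\;\bar\xi(wR)\;\xi(f)^{-1} \;=\; \bar\xi\big(f(w)R\big), \qquad w \in F_2. \]
The right-hand side is $\alpha(f)$ viewed through $\bar\xi$, while the left-hand side is conjugation by $\xi(f)\in\Sy4$ acting on $\op{V}_4$; hence $\alpha|_{\SA2}$ is the composite $\SA2 \xrightarrow{\ \xi\ } \Sy4 \xrightarrow{\ c\ } \Aut(\op{V}_4)$, with $c$ the conjugation action. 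Therefore $\G^{+}(\op{V}_4,\xi) = \xi^{-1}(\ker c) = \xi^{-1}\big(C_{\Sy4}(\op{V}_4)\big)$, and a direct computation on the three double transpositions $(12)(34),(13)(24),(14)(23)$ shows $C_{\Sy4}(\op{V}_4) = \op{V}_4$ — equivalently $c\colon \Sy4 \onto \Aut(\op{V}_4) \cong S_3$ has kernel $\op{V}_4$. Thus $\G^{+}(\op{V}_4,\xi) = \xi^{-1}(\op{V}_4)$.

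To conclude, I would recall from Equation~(\ref{eq:pure}) that $\ker\xi = K_4$ and $\xi(\IA2) = \op{V}_4$. Given $f \in \SA2$ with $\xi(f) \in \op{V}_4$, choose $i_w \in \IA2$ with $\xi(i_w) = \xi(f)$ and write $f = (f\, i_w^{-1})\,i_w$ with $f\, i_w^{-1} \in K_4$; since $K_4 \unlhd \SA2$ and $\IA2 = \langle i_a, i_b\rangle$, this gives $\xi^{-1}(\op{V}_4) = K_4\cdot\IA2 = \langle K_4, a, b\rangle$, with $a,b$ acting by conjugation, as claimed. (As a sanity check, both subgroups have index $[\Sy4:\op{V}_4] = 6$ in $\SA2$.) The one delicate step I anticipate is the middle one: pinning down that the abstract $\A2$-action on $F_2/R$ really is the conjugation action of $\Sy4$ on $\op{V}_4$ transported along Equation~(\ref{eq:pure}); once that naturality is secured, the remainder is the finite computation of $C_{\Sy4}(\op{V}_4)$ together with the bookkeeping of indices.
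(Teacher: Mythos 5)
Your proposal is correct and follows essentially the same route as the paper: both identify the defining map $\SA2 \to \op{Aut}(\op{V}_4)$ with the composite $\SA2 \stackrel{\xi}{\to} \Sy4 \to \op{Aut}(\op{V}_4)$ (conjugation on $\op{V}_4 \unlhd \Sy4$) and then read off $\G^{+}(\op{V}_4,\xi) = \xi^{-1}(\op{V}_4) = \langle K_4, a, b\rangle$ from the diagram in Equation (\ref{eq:pure}). The only (minor) difference is that you justify the key naturality step purely algebraically via $f\, i_w\, f^{-1} = i_{f(w)}$ and the homomorphism property of $\xi$, whereas the paper invokes the braid-theoretic description of the $\SA2$-action (conjugation by a braid inducing conjugation by its permutation); your version is arguably more self-contained.
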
 
\begin{proof} First, note that we can interpret $\xi$ as the {\em mod $2$} reduction of the maximal abelian quotient map \[ \xi \colon F_2 \cong \op{Inn}(F_2) \longrightarrow H_1(F_2;\Z_2) \cong \op{V_4} \] hence $J = \ker \xi \fin F_2$ is characteristic and $\op{Stab}(J) = \A2$. It follows from the definition of congruence subgroup (which we reviewed in the proof of Theorem \ref{thm:kw}) that $\G^{+}(\op{V_4},\xi) \fin \SA2$ is principal, and it equals the kernel of the map $\SA2 \to \op{Aut}(\op{V_4})$. We claim that this map is given by the composite map \begin{equation} \label{eq:stabaut} \SA2 \stackrel{\xi}{\longrightarrow} \op{S_4} \longrightarrow \op{S_3}  \cong \aut(\op{V_4})\end{equation} coming from the diagram of Equation (\ref{eq:pure}), where $\op{S_3}$ acts on $V_4$ via the conjugation action of $\op{S}_4$ on $V_4 \unlhd S_4$ which descends to $\op{S_3}$ as $\op{V_4}$ is abelian. (Note that $S_4$ can be seen as the holomorph $V_4 \rtimes \aut(V_4)$ of the Klein group.) In fact, $\xi \colon \SA2 \to \op{S_4}$ is induced by the permutation map $\Xi \colon B_4 \to \op{S}_4$, and the action of $\SA2$ on $F_2$ (given by conjugation by a braid in $B_4$ on $F_2 \unlhd B_4$) induces the action by conjugation by that braid's permutation in $\op{S}_4$ on $\op{V_4} \unlhd \op{S_4}$, hence the epimorphism in Equation (\ref{eq:stabaut}) induces the desired automorphism on the quotient $\op{V_4}$. Finally, it follows from the diagram in Equation (\ref{eq:pure}) that $\G^{+}(\op{V_4},\xi) = \xi^{-1}(\op{V}_4)$,  the kernel of the epimorphism in Equation (\ref{eq:stabaut}), is given by $\langle K_4, a,b \rangle$. 
 \end{proof}
\begin{remark} Note that we could have reached the same conclusion using \cite[Theorem 3.19]{Lin04}, which guarantees that there exists a unique epimorphism from $B_4$ to $\op{S_3}$ up to automorphism of the latter, hence the kernel of {\em any} epimorphism  $\SA2 \to \op{S_3}$ is the same. The proof above is more explicit, and tells us a bit more.
\end{remark}
The proof of Proposition \ref{prop:stabaut} entails that $\G^{+}(\op{V_4},\xi)$ is an extension of $F_2$ by $\G[2]$. Furthermore, as $\G^{+}(\op{V_4},\xi) = \xi^{-1}(\op{V}_4) \fin \SA2$, the epimorphism $\xi \colon F_2 \to \op{V}_4$ extends naturally to $\xi \colon \G^{+}(\op{V_4},\xi) \to \op{V_4}$ with kernel $K_4$. Consider the epimorphism \[ \G^{+}(\op{V_4},\xi) \to \G[2] \cong F_2 \times \Z_2 \to \Z_2,\] and denote its kernel $G_4$, a normal subgroup of index $12$ in $\SA2$. Observe that $\op{Inn}(F_2) \cong F_2 \unlhd G_4$ is the fiber of a $F_2$-by-$F_2$ extension whose base $F_2 := \ker(\G[2] \to \Z_2)$ is a normal subgroup of index $12$ in $SL_2(\Z)$, often referred to as the Sanov subgroup of $SL_2(\Z))$. 
We combine the observations above in the following:
\begin{proposition} There exists a commutative diagram  
\[ \xymatrix@=12pt{ 
J \ar[rrr] \ar[dr] \ar[ddd] & & & J \ar[dr] \ar@{-}'[d][dd]  & & & & &
\\ & \what{K}_4  \ar[rrr]\ar[ddd] \ar[dr]^{\psi}  &   & & K_4 \ar'[d][ddd] \ar[dr]^{\psi}  \ar[rrr] & &  & \Z_2 \ar[dr] \ar'[d][ddd] & 
\\&  & F_2 \ar[ddd] \ar[rrr] & \ar[d] & &  \G(2) \ar[rrr]\ar[ddd] & & & \Z_2 \ar[ddd]
\\ \IA2 \ar[ddd] \ar[dr] \ar@{-}'[r][rr] &  & \ar[r] & \IA2 \ar@{-}'[d][dd] \ar[dr] \ar[dr]  & & &  & &  &
\\ & G_4  \ar[ddd] \ar'[r][rrr] \ar[dr]^{\psi} &  &  & \G^{+}(\op{V_4},\xi) \ar'[d][ddd]  \ar'[r][rrr]\ar[dr]^{\psi} & &  & \Z_2 \ar[dr] &
\\&  & F_2 \ar[rrr]   & \ar[d] & & \G[2]  \ar[rrr] &  &  & \Z_2 & 
\\ \op{V}_4 \ar'[r][rrr] \ar[dr] & & &\op{V}_4 \ar[dr] &  &  & & & &
\\  & \op{V}_4 \ar[rrr] & &  & \op{V}_4  & & & & &
\\  & & &  & & & &  &  & } \] where all $2$- and $3$-term sequences are exact.
\end{proposition}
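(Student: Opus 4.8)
The plan is to regard the displayed array not as something to be built by hand, but as the tautological ``cube of quotients'' attached to a single group carrying three normal subgroups; once the quotients are correctly named, exactness and commutativity of the array become formal. Concretely, set $\Gamma := \G^{+}(\op{V}_4,\xi) \fin \SA2$. From Proposition \ref{prop:stabaut} and the ensuing discussion we already have two of the three structures: the epimorphism $\xi \colon \Gamma \onto \op{V}_4$ with $\ker \xi = K_4$, and the restriction of $\psi$, an epimorphism $\psi \colon \Gamma \onto \G(2)$ with $\ker \psi = \op{Inn}(F_2)$. For the third, I would use the identification $\G(2) \cong F_2 \times \Z_2$ from the proof of Theorem \ref{thm:kw} to fix the projection $\G(2) \onto \Z_2$ whose kernel is the Sanov subgroup (denoted $F_2$ in the diagram), and put $\varepsilon := (\Gamma \xrightarrow{\psi} \G(2) \onto \Z_2)$, so that $G_4 = \ker \varepsilon$ by definition. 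Since $\varepsilon$ factors through $\psi$, one has $\op{Inn}(F_2) = \ker \psi \leq \ker \varepsilon = G_4$; this containment is worth recording at once, as it is what makes two pairs of vertices of the cube coincide (the two copies of $J$ and the two copies of $\op{Inn}(F_2)$).

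The two remaining vertices are introduced as intersections, $J := \op{Inn}(F_2) \cap K_4$ and $\what{K}_4 := K_4 \cap G_4$, both normal in $\Gamma$; the containment $\op{Inn}(F_2) \leq G_4$ then forces the lattice generated under intersection by $\{\op{Inn}(F_2), K_4, G_4\}$ to consist of exactly the six groups $\Gamma$, $\op{Inn}(F_2)$, $K_4$, $G_4$, $\what{K}_4$, $J$ that occur in the diagram. To label every quotient node it suffices to check that the pairwise products of $\op{Inn}(F_2), K_4, G_4$ are all of $\Gamma$, and this reduces entirely to the single nontrivial input $\psi(K_4) = \G(2)$ already established in Equation \eqref{eq:pure}: from it, $K_4 \cdot \op{Inn}(F_2) = \Gamma$, whence $\op{Inn}(F_2)/J \cong \Gamma/K_4 = \op{V}_4$ and $K_4/J \cong \Gamma/\op{Inn}(F_2) = \G(2)$ (recovering the two leftmost columns of Equation \eqref{eq:pure}), and $\psi(K_4) = \G(2)$ surjects onto $\Z_2$, so $K_4 \not\leq G_4$, giving $K_4 G_4 = \Gamma$ and hence $K_4/\what{K}_4 \cong \Gamma/G_4 = \Z_2$, $G_4/\what{K}_4 \cong \Gamma/K_4 = \op{V}_4$; the two remaining nodes $\what{K}_4/J \cong F_2$ and $G_4/\op{Inn}(F_2) \cong F_2$ come out the same way, using that $G_4$, and hence $\what{K}_4 = K_4 \cap G_4$, is the $\psi$-preimage of the Sanov subgroup $F_2 \leq \G(2)$. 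All the two-term sequences in the array are identities among these same six groups.

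With the nodes named, commutativity is automatic: every group in the array is a subgroup of $\Gamma$, and every arrow is either an inclusion among these subgroups or a restriction of one of $\varepsilon$, $\xi$, $\psi$, so the squares commute on the nose; and each row and column has the shape $1 \to N \cap N' \to N'' \to N''/(N \cap N') \to 1$ for suitable $N, N', N''$ among $\{\op{Inn}(F_2), K_4, G_4\}$, hence is exact once its quotient is identified as above. Equivalently, the cube is just the two $3 \times 3$ arrays of exact sequences attached respectively to $K_4$ and to $\Gamma = \G^{+}(\op{V}_4,\xi)$, glued along the three $\xi$-sequences $1 \to J \to \op{Inn}(F_2) \to \op{V}_4 \to 1$, $1 \to \what{K}_4 \to G_4 \to \op{V}_4 \to 1$ and $1 \to K_4 \to \Gamma \to \op{V}_4 \to 1$. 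I do not anticipate a genuine obstacle here: the only points that require care are the surjectivity-and-index bookkeeping just sketched --- which is precisely where $\psi(K_4) = \G(2)$ and $\op{Inn}(F_2) \leq G_4$ get used --- and the cosmetic but necessary choice, made at the start, of the $\Z_2$-quotient of $\G(2) \cong F_2 \times \Z_2$ so that its kernel is the Sanov subgroup, which is what makes the two occurrences of $F_2$ in the picture refer to the same group.
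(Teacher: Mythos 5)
Your argument is correct and in substance the same as the paper's: the proposition is stated there without a separate proof, as a combination of the immediately preceding observations (the epimorphism $\xi \colon \G^{+}(\op{V_4},\xi) \to \op{V}_4$ with kernel $K_4$, the extension $1 \to \operatorname{Inn}(F_2) \to \G^{+}(\op{V_4},\xi) \to \G(2) \to 1$, the definition of $G_4$ as the kernel of $\G^{+}(\op{V_4},\xi) \to \G(2) \to \Z_2$, and the surjection $\psi(K_4) = \G(2)$ from Equation (\ref{eq:pure})), and your lattice-of-normal-subgroups bookkeeping is precisely the formalization of that combination. The only blemish is the clause ``the pairwise products of $\operatorname{Inn}(F_2)$, $K_4$, $G_4$ are all of $\Gamma$'', which fails for the pair $(\operatorname{Inn}(F_2), G_4)$ since $\operatorname{Inn}(F_2) \leq G_4$; but you record that containment explicitly and use it correctly to collapse the corresponding vertices, so nothing in the argument is affected.
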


We emphasize that $G_4$ is an $F_2$-by-$F_2$ group, whose (fiberwise) index-$4$ subgroup $\what{K}_4$ is a free-by-free extension 
\[ 1 \longrightarrow J \longrightarrow \what{K}_4 \longrightarrow F_2 \longrightarrow 1. \] (According to \cite[Section 9]{GL09}, $H_1(\G^{+}(\op{V_4},\xi);\Z) = \Z^2 \times \Z_2^{3}$: the three $\Z_2$ factors are accounted in the diagram as $V_4$ and $\Z_2$.)

\begin{remark} The Sanov subgroup of $SL_2(\Z)$ is the subgroup generated by the matrices
\begin{align*}  (TST)^2 = & \begin{pmatrix} 1 & 0 \\ 2 & 1 \end{pmatrix} & T^2 = & \begin{pmatrix} 1 & 2 \\ 0 & 1 \end{pmatrix},
\end{align*}
where $S,T$ are standard generators of $SL_2(\Z)$, see Appendix.
\end{remark}

A consequence of Theorem \ref{thm:kw} is that $\A2$, as observed in \cite{KW22},  is not coherent. We present here a proof of this fact, which is essentially equivalent to the original one, and that will give us a slightly stronger result, as warm-up for the general case of Theorem \ref{thm:main}.  
\begin{corollary} (of Theorem \ref{thm:kw}) $\A2$ admits subgroups of type $F_1$ that are not of type $FP_2(\Q)$, in particular it is not coherent. 
\end{corollary}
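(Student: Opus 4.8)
The plan is to descend from the virtually-excessive-homology statement of Theorem~\ref{thm:kw} to a concrete finitely generated subgroup of $\A2$ (in fact of a finite-index subgroup, hence of $\A2$ itself) that fails to be $FP_2(\Q)$, using the standard Bieri--Stallings-type obstruction coming from the kernel of a character. Concretely, I would take the split extension $1 \to H \to T(H) \to T \to 1$ produced in \emph{Proof 1} of Theorem~\ref{thm:kw}, where $H \cong F_3$, $T$ is free, and the coinvariant homology $H_1(H;\Q)_{T(H)}$ has rank at least one. The point of excessive homology is exactly that, after possibly passing to a further finite-index subgroup $T' \leq_f T$ (which only helps, and which I can arrange so that $T'$ surjects $\Z$ compatibly), there is a character $\varphi \colon T(H) \to \Z$ whose restriction to the fiber $H$ is \emph{nontrivial} — this is what ``the rank of the homology strictly exceeds that of the base'' buys us. Then $N := \ker\varphi \leq T(H) \leq \A2$ is the candidate witness.

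The key steps, in order: (1) Use the excessive homology to produce $\varphi \colon T(H) \to \Z$ restricting nontrivially to $H \cong F_3$; since $T(H)$ is finitely generated (generated by $i_x, i_y, i_z$ together with finitely many lifts of free generators of $T$), so is $N = \ker\varphi$ — indeed $N$ is of type $F_1$, and one should note it is genuinely infinitely generated qua abstract issue is moot since f.g. is all we need for ``type $F_1$''. Actually the cleaner route: $N$ is finitely generated because $\varphi$ restricted to $H$ is onto a finite-index subgroup of $\Z$, so $N \cap H$ has finite index in... no — rather, $H \not\leq N$, so $N$ maps onto a finite-index subgroup of $T$ with kernel $N \cap H$, and $N\cap H$ is normally finitely generated in $N$; combined with $N/(N\cap H)$ finitely generated free, $N$ is finitely generated. (2) Show $N$ is not $FP_2(\Q)$. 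For this I would invoke the $\Q$-homological version of the Bieri--Stallings criterion / the fact that if $H_1(H;\Q)$ is an infinite-dimensional $\Q[T]$-module or, more to the point, if $H_2(N;\Q)$ is infinite-dimensional. The workhorse computation is: $H_*(N;\Q)$ fits into an exact sequence (LHS spectral sequence for $1 \to N\cap H \to N \to \varphi(N) \to 1$, or directly the Wang sequence for the infinite cyclic quotient $T(H)/N \cong \Z$), from which $H_1(N;\Q)$ contains $H_1(H;\Q)/(t-1)$ as a piece of controlled size and $H_2(N;\Q)$ surjects onto $\ker(t-1 \colon H_1(H;\Q) \to H_1(H;\Q))$ up to finite-dimensional error. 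Since $\dim_\Q H_1(H;\Q)_{T(H)} \geq 1$ means exactly $\operatorname{coker}(t-1)$ on the relevant rank-$3$ coinvariant module is nonzero — and because $H_1(H;\Q) = \Q^3$ is finite-dimensional, $t-1$ acting on it has a nonzero kernel iff it has a nonzero cokernel, so $\ker(t-1)\neq 0$ — hence $H_2(N;\Q) \neq 0$; iterating/bootstrapping (or observing the relevant module over $\Q[t^{\pm1}]$ is not finitely generated as a $\Q$-vector space after restriction) gives that $H_2(N;\Q)$ is infinite-dimensional, so $N$ is not $FP_2(\Q)$.

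Let me restate step (2) more carefully, as it is the main obstacle. The cleanest formulation: $N$ is $FP_2(\Q)$ would force, via the fact that $N$ is normal in $T(H)$ with $T(H)/N \cong \Z$ and $T(H)$ of type $F_\infty$ (being virtually a direct product of free groups, or at least $T(H)$ is $FP_\infty$ over $\Q$ — it is free-by-free, poly-free, hence $FP_\infty$), that $H_2(N;\Q)$ is finitely generated over $\Q[t^{\pm 1}]$ \emph{and} that $H_1(N;\Q)$ is finitely generated over $\Q[t^{\pm1}]$; combined with finite generation of $N$ this pins down $H_1(N;\Q)$ and then the Wang sequence
\[
0 \to H_2(T(H);\Q) \to H_1(N;\Q) \xrightarrow{t-1} H_1(N;\Q) \to H_1(T(H);\Q) \to H_0(N;\Q) \xrightarrow{t-1} H_0(N;\Q)
\]
together with the analogous sequence relating $H_*(N)$ to $H_*(H)$ via the subnormal chain forces a contradiction with $\dim_\Q H_1(H;\Q)_{T(H)} \geq 1$. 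The subtlety I expect to fight with is making sure the excessive homology at the level of the \emph{fiber} $H$ translates into excessive homology at the level of $N$, i.e.\ that the character $\varphi$ can be chosen so that $N$ really does see the nontrivial coinvariant class and so that no cancellation in the spectral sequence kills it; this is precisely where one uses that $T$ is free (so $H_{\geq 2}(T;-)=0$, collapsing the relevant spectral sequence) and that $H\cong F_3$ has $H_{\geq 2}=0$, so the only homology in play sits in degrees $0,1$ and the bookkeeping is a finite Wang-sequence argument rather than a genuine spectral-sequence grind. I would also remark that this already gives the ``slightly stronger'' claim advertised — $FP_2(\Q)$ rather than merely $FP_2(\Z)$ or finite presentability fails — since the whole obstruction is detected with $\Q$ coefficients.
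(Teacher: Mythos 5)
Your step (1) is essentially the paper's: the excessive homology produces a discrete character on a free-by-free finite-index subgroup of $\SA2$ that restricts nontrivially to the fiber and hence has finitely generated kernel. This is exactly what \cite[Theorem 5.1]{KW22} and \cite[Theorem 1]{FV23} deliver, and you should cite them rather than re-derive the finite generation: your ad hoc argument (``$N\cap H$ is normally finitely generated in $N$, and $N/(N\cap H)$ is finitely generated free, so $N$ is finitely generated'') is garbled as written, though salvageable. The genuine gap is in step (2). First, you conflate two different module structures: in the Wang sequence for $1\to N\to T(H)\to \Z\to 1$ the operator $t-1$ acts on $H_*(N;\Q)$, not on $H_1(H;\Q)$; the operator acting on $H_1(H;\Q)\cong\Q^3$ is the monodromy of the \emph{other} fibration $H\to T(H)\to T$, and the two cannot be swapped the way your kernel/cokernel juggling requires. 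Second, even granting $H_2(N;\Q)\neq 0$, that is no contradiction with $FP_2(\Q)$; you would need infinite-dimensionality, and ``iterating/bootstrapping'' is not an argument. Third, and most importantly, you aim the final contradiction at $\dim_{\Q}H_1(H;\Q)_{T(H)}\geq 1$, but excessive homology is not what obstructs $FP_2(\Q)$: its only role is to guarantee the \emph{existence} of the character in step (1). The obstruction is the nonvanishing Euler characteristic.

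Your Wang-sequence framework can be repaired, but once repaired it collapses onto the argument the paper actually uses. If $N=\ker\varphi$ were $FP_2(\Q)$, then since $\operatorname{cd}_{\Q}N\leq\operatorname{cd}_{\Q}T(H)=2$ it would be $FP(\Q)$, so every $H_i(N;\Q)$ would be finite-dimensional and vanish for $i>2$; taking alternating sums of dimensions in the Wang sequence and using that $\dim\ker(t-1)=\dim\operatorname{coker}(t-1)$ on each finite-dimensional $H_i(N;\Q)$ forces $\chi(T(H))=0$, whereas $\chi(T(H))=\chi(F_3)\chi(T)=-2\,(1-\operatorname{rk}T)\neq 0$. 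The paper packages precisely this as an $L^2$-Betti number statement: $FP_2(\Q)$ of the algebraic fiber would force $b_i^{(2)}(\what{K}_4)=0$ for $0\leq i\leq 2$ by \cite[Theorem 7.2(5)]{Lu02} and \cite[Proposition 14]{IMP21}, while $b_2^{(2)}(\what{K}_4)=\chi(\what{K}_4)\neq 0$ by \cite[Proposition 3.1]{GN21}. Either phrasing is acceptable; what your write-up is missing is the identification of $\chi\neq 0$, rather than excessive homology, as the quantity that kills $FP_2(\Q)$.
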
 
\begin{proof} Theorem \ref{thm:kw} entails  that $\A2$ is virtually algebraically fibered (see \cite[Theorem 5.1]{KW22}, \cite[Theorem 1]{FV23}), in particular $\what{K}_4$ (or even $K_4$), admits a fibration $\varphi \colon \what{K}_4 \to \Z$. Specifically, there exists a discrete character $\phi \colon J \to \R$ which is invariant under the action of $F_2$ (or even $\G[2]$) on $H^{1}(J;\Z)$. From the Lyndon--Hochschild--Serre sequence in cohomology, which takes the form \[ 0 \longrightarrow H^{1}(F_2;\Z) \longrightarrow H^{1}(\what{K}_4;\Z) \longrightarrow H^{1}(J;\Z)^{F_2} \longrightarrow 0 \] we deduce that $\phi \in H^{1}(J;\Z)^{F_2} \subset H^{1}(J;\Z)$ extends to a discrete character $\varphi \colon \what{K}_4 \to \R$ with kernel of type $F_1$. However, the fiber $\ker{\varphi}$ of this fibration cannot be of type $FP_2(\Q)$, hence {\em a fortiori} it is not finitely presented: in fact, $\what{K}_4$ is free-by-free, and if such fiber where $FP_2(\Q)$, we would have that the $L^{2}$-Betti numbers satisfy $b_{i}^{2}(\what{K}_4) = 0$ for $0 \leq i \leq 2$ by \cite[Theorem 7.2(5)]{Lu02}, see also \cite[Proposition 14]{IMP21}, all while $b_{2}^{(2)}(\what{K}_4) = \chi(\what{K}_4) = 4 \neq 0$ by general properties of poly-free groups (see \cite[Proposition 3.1]{GN21}). It follows that $\A2$ is not coherent in a strong form. 
\end{proof}

\section{Fibered poly-free subgroups of $\An$ and $\On$} \label{sec:poly}
We cannot hope to use (virtual) algebraic fibrations in the study of higher coherence of the groups $\An$ and $\On$, as these groups, at least for $n \geq 4$, have vanishing virtual first Betti numbers. However, we will up the results above to decide higher coherence properties for $\An$ and $\On$, and we will do so piggybacking on the result in Theorem \ref{thm:kw}. Consider again the Sanov subgroup $F_2 \fin \G[2] \fin SL _2(\Z)$. Without loss of generality, we can define a partial section of the map $\psi \colon \SA2 \to \SO2$ restricted to $F_2 \leq SL _2(\Z) \cong \SO2$, which amounts to embedding the base $F_2$ of $G_4$ as a subgroup of $\SA2$ that will be denoted $\mathbb{F}_2$, and to interpreting $G_4$ as a (specific) semi-direct product $F_2 \rtimes \mathbb{F}_2$.  (The fiber of $G_4$, properly, should be interpreted as $\IA2$, but we will omit this for sake of notation.) Furthermore, changing the partial section of $\psi$ if needed, we can also assume without loss of generality that $\mathbb{F}_2 \leq G_4$ is contained in the kernel $\what{K}_4$ of the map $\xi \colon G_4 = F_2 \rtimes \mathbb{F}_2 \to V_4$. This entails that $\what{K}_4 = \ker{\xi}$ admits as well the structure of semi-direct product $\what{K}_4 = J \rtimes \mathbb{F}_2$ with the action of $\mathbb{F}_2$ on $J$ given by the restriction of the action of $\mathbb{F}_2$ on the fiber of $G_4 = F_2 \rtimes \mathbb{F}_2$. 

Next, we consider the diagonal action of $\mathbb{F}_2 \leq \SA2$ on $F_2^{2n-4}$ to define the semi-direct product $F_2^{2n-4} \rtimes \mathbb{F}_2$. Our main interests in this construction is the fact that we can embed the group $F_2^{2n-4} \rtimes \mathbb{F}_2$ in both $\An$ and $\On$: this was proven in
\cite[Section 4]{BKK02}, see also \cite[Section 3]{GN21}:
\begin{proposition} \label{prop:bkk} (Bestvina--Kapovich--Kleiner) For all $n \geq 3$ there exists a monomorphism 
\[ F_2^{2n-4} \rtimes \mathbb{F}_2 \stackrel{\Phi}{\longrightarrow} \An \stackrel{\upsilon}{\longrightarrow} \On, \]
 hence the group $F_2^{2n-4} \rtimes \mathbb{F}_2$ is a subgroup of $\An$ and $\On$. \end{proposition}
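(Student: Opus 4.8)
The plan is to realize the group $F_2^{2n-4} \rtimes \mathbb{F}_2$ inside $\An$ by an inductive ``coning'' construction that adds one rank-2 free factor at a time, using the fact that $\A2 \le \An$ acts on a free factor, while the extra generators of $F_n$ provide the transvection-type automorphisms that assemble the direct power. Concretely, write $F_n = \langle x_1, \dots, x_n\rangle$ and think of the first two generators $a = x_1$, $b = x_2$ as carrying the $\SA2$-action from Section \ref{sec:virt}. For each index $j \in \{3, \dots, n\}$ one gets a natural copy of $F_2$ inside $\An$ supported on $\{x_1, x_2, x_j\}$: namely the subgroup generated by the two ``multiplication'' automorphisms $x_j \mapsto x_j a$ (fixing all other generators) and $x_j \mapsto x_j b$ (fixing all other generators). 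These two automorphisms generate a free group of rank $2$ (they are ``partial conjugations in disguise'': their images lie in the subgroup of $\An$ fixing the subgroup $\langle x_1, \dots, \widehat{x_j}, \dots, x_n\rangle$ pointwise and acting on the HNN-style stable letter $x_j$), and for $j \ne k$ the corresponding copies commute, since one only alters $x_j$ and the other only alters $x_k$ while the letters $a,b$ they reference are untouched by both. This yields a commuting family giving $F_2^{n-2}$; doing the same with $a^{-1}, b^{-1}$ on ``the other side'' (i.e. using left-multiplications $x_j \mapsto a x_j$, $x_j \mapsto b x_j$ as an independent commuting family for a second batch of indices, or equivalently doubling via a symmetric choice) brings the count to $2n-4$ commuting $F_2$ factors.

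Next I would check that the already-constructed copy $\mathbb{F}_2 \le \SA2 \le \An$ (acting only on $a = x_1, b = x_2$ and fixing $x_3, \dots, x_n$) normalizes this $F_2^{2n-4}$ and acts on it \emph{diagonally}: conjugating the automorphism ``$x_j \mapsto x_j a$'' by an element $g \in \mathbb{F}_2$ replaces $a$ by $g(a) \in F_2 = \langle a, b\rangle$, i.e. sends the pair of generators $(x_j \mapsto x_j a,\; x_j \mapsto x_j b)$ to $(x_j \mapsto x_j g(a),\; x_j \mapsto x_j g(b))$, which is exactly the image of the original generators under the automorphism of $F_2$ determined by $g$ — and this is the \emph{same} automorphism of the abstract $F_2$ for every $j$, hence the action is diagonal. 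This identifies the subgroup $\langle \mathbb{F}_2, F_2^{2n-4}\rangle \le \An$ with the internal semidirect product $F_2^{2n-4} \rtimes \mathbb{F}_2$ with the diagonal action, matching the abstract group in the statement. For the map to $\On$ one postcomposes with $\upsilon \colon \An \to \On$; the point is that $\upsilon$ restricted to this subgroup is still injective, which follows because none of the automorphisms involved is inner — the multiplication automorphisms $x_j \mapsto x_j a$ are not inner (an inner automorphism by $w$ would have to conjugate \emph{every} generator, not fix $x_1, \dots, x_{j-1}, x_{j+1}, \dots, x_n$), and a short argument shows the whole subgroup meets $\operatorname{Inn}(F_n)$ trivially (an inner automorphism lying in it would have to act trivially on $x_1$ and $x_2$, forcing the conjugating element into the center of $F_n$, which is trivial). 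Alternatively, and more in the spirit of the reference, I would simply cite \cite[Section 4]{BKK02} and \cite[Section 3]{GN21}, where exactly this embedding is constructed and the injectivity into $\On$ is verified.

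The main obstacle — and the reason this proposition is quoted rather than reproved in full — is bookkeeping the \emph{exact} count $2n-4$ and simultaneously ensuring that all $2n-4$ chosen $F_2$ factors commute with each other \emph{and} are all normalized by a single common $\mathbb{F}_2$ acting diagonally on all of them. A naive choice of $2(n-2)$ factors built from $\{x_j \mapsto x_j a,\ x_j \mapsto x_j b\}$ and $\{x_j \mapsto a x_j,\ x_j \mapsto b x_j\}$ over the same set of indices $j$ does \emph{not} give commuting factors for a fixed $j$ (left- and right-multiplication by $a$ on $x_j$ generate a larger, non-abelian-on-the-nose configuration), so one must instead spread the $2n-4$ factors over a carefully chosen collection of \emph{ordered} pairs or use the Bestvina--Kapovich--Kleiner trick of working with a morphism of graphs of groups / a blown-up rose where each edge contributes one free factor; the verification that the resulting subgroup is exactly a direct product of $2n-4$ copies (and not fewer, due to hidden relations, nor more) is the content of \cite[Theorem 6.1]{HW20}-style arguments and is precisely what one wants to take off the shelf. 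So my plan for the write-up is: state the generators explicitly, verify commutation and the diagonal normalizing action in the two or three model cases, and then \emph{cite} \cite[Section 4]{BKK02} (and \cite[Section 3]{GN21}) for the general assembly and for injectivity of $\Phi$ and of $\upsilon \circ \Phi$, rather than reconstructing the full combinatorial argument.
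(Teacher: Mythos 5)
Your construction is essentially the one the paper uses (transvection-type automorphisms multiplying each $x_j$ on the left and right by words in $a,b$, with $\mathbb{F}_2\leq \SA2$ acting diagonally by conjugation), and your verification of the diagonal action is correct. But the ``main obstacle'' you identify is spurious, and the false claim it rests on derails your write-up plan. For a fixed $j$, the automorphisms $\alpha\colon x_j\mapsto x_j a$ and $\beta\colon x_j\mapsto a x_j$ (each fixing $a,b$ and all other generators) \emph{do} commute: both composites send $x_j\mapsto a x_j a$ and fix every other generator, since left and right translations by fixed elements commute and neither map touches the letters $a,b$. Moreover the left-multiplication copy of $F_2$ and the right-multiplication copy of $F_2$ on the same $x_j$ intersect trivially (an identity $l^{-1}x_j = x_j r$ with $l,r\in\langle a,b\rangle$ forces $l=r=1$ by normal form in $F_n$), so each index $j\in\{3,\dots,n\}$ contributes a genuine $F_2\times F_2$, and the $n-2$ indices give exactly $(F_2)^{2n-4}$ with no ``hidden relations.'' This is precisely the paper's map $\Phi_\kappa\colon x_i\mapsto l_i^{-1}x_i r_i$, $a\mapsto\mu(a)$, $b\mapsto\mu(b)$, and the homomorphism property is a three-line computation; no appeal to \cite{BKK02}, \cite{GN21} or \cite{HW20} is needed for the assembly, and no ``spreading over ordered pairs'' is required.

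A secondary gap: your argument that the subgroup meets $\operatorname{Inn}(F_n)$ trivially assumes that an inner automorphism lying in the image must act trivially on $x_1,x_2$. That is not automatic: if $i_c=\Phi_\kappa$, then $i_c$ restricted to $\langle a,b\rangle$ equals $\mu$, which need not be trivial a priori. What the equations $c^{-1}x_ic=l_i^{-1}x_ir_i$ actually force is $l_i=r_i=c\in\langle a,b\rangle$ and hence $\mu=i_c\in\IA2$; one then needs the additional input that $\mathbb{F}_2\cap\IA2=1$ (built into the choice of $\mathbb{F}_2$ as a section of the Sanov subgroup) to conclude $\mu=1$, whence $c\in Z(F_2)=1$. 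Without invoking that intersection condition your argument does not close.
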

\begin{proof} Recall that $\mathbb{F}_2$ is interpreted as a subgroup of $\SA2$, acting on $F_2 = \langle a,b \rangle$ hence diagonally on $F_2^{2n-4}$. We define a monomorphism $\Phi \colon F_2^{2n-4} \rtimes \mathbb{F}_2 \longrightarrow \An$ as follow. Identify $F_n := \langle a,b,x_3,\ldots,x_n \rangle$. For any $\kappa := (l_3,r_3,\ldots,l_n,r_n,\mu) \in F_2^{2n-4} \rtimes \mathbb{F}_2$ where the $l_i,r_i$ are words in $a$ and $b$ and $\mu \in \mathbb{F}_2 \leq \SA2$, we define an automorphism $\Phi_{\kappa} \colon F_n  \to F_n$ as 
\begin{align} 
\nonumber \Phi_{\kappa} \colon  & a \mapsto \mu(a)  \\ 
\label{eq:triple} & b \mapsto \mu(b)  \\
\nonumber & x_i \mapsto l_i^{-1} x_i r_i,  \,\ i=3,\ldots n.
\end{align} (Note that our definition differs, albeit in an immaterial way, from that of \cite{BKK02}.) As $\mathbb{F}_2$ acts diagonally on the $F_2$ factors, the product on $F_2^{2n-4} \rtimes \mathbb{F}_2$ is defined as
\[ \kappa \cdot \kappa' = (l_3,r_3,\ldots,l_n,r_n,\mu) (l'_3,r'_3,\ldots,l'_n,r'_n,\mu')  = (l_3 \mu (l'_3),r_3 \mu(r'_3),\ldots,l_n \mu (l'_n),r_n \mu(r'_n), \mu \circ \mu');\] this entails that $\Phi$ is a group homomorphism, as  
\begin{align*} 
 \Phi_{\kappa} \circ \Phi_{\kappa'} (a) & = \mu \circ \mu'(a) = \Phi_{\kappa \cdot \kappa'}(a) \\
 \Phi_{\kappa} \circ \Phi_{\kappa'} (b) & = \mu \circ \mu'(b) = \Phi_{\kappa \cdot \kappa'}(b) \\
 \Phi_{\kappa} \circ \Phi_{\kappa'} (x_i) & = \Phi_{\kappa}( (l'_i)^{-1} x_i r'_i) = (l_i \mu(l'_i)) ^{-1}  x_i r_i \mu(r'_i)  = \Phi_{\kappa \cdot \kappa'}(x_i)
\end{align*}
It is not difficult to see, from the definition, that $\Phi$ is a monomorphism. Furthermore, denoting $\upsilon \colon \An \to \On$ the natural epimorphism, if $\upsilon \circ \Phi_{\kappa} = 1_{\On}$, then $\Phi_{\kappa}$ must be an inner automorphism, which requires $l_i = r_i = c \in F_2 = \langle a,b\rangle, \,\ i = 3,\ldots,n$ and $\mu = i^{-1}_{c} \in \IA2$. But as by assumption the intersection $\mathbb{F}_2 \cap \IA2$ in the semi-direct product $G_4$ is the identity element, then $\kappa$ is the identity element in $F_2^{2n-4} \rtimes \mathbb{F}_2$.
\end{proof} 

\begin{corollary} \label{cor:inan} The action of $F_2^{2n-4} \rtimes \mathbb{F}_2$ on $F_n$ of Equation (\ref{eq:triple}) induces the commutative diagram
\begin{equation} \label{eq:inan} \xymatrix@R0.5cm{
& 1 \ar[d] &
1 \ar[d] & 1 \ar[d] &\\
1 \ar[r] & F_n \ar[d]^{\cong} \ar[r] &
F_n \rtimes (F_2^{2n-4} \rtimes \mathbb{F}_2) \ar[d] \ar[r] & F_2^{2n-4} \rtimes \mathbb{F}_2 \ar[d]^{\upsilon \circ \Phi} \ar[r] & 1 \\
1\ar[r]&
F_n \ar[r] &
\An \ar[r]^{\upsilon} & \On  \ar[r]&1\\} 
\end{equation}
\end{corollary}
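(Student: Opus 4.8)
The plan is to obtain the diagram of Equation (\ref{eq:inan}) essentially for free from the action constructed in Proposition \ref{prop:bkk}, treating the horizontal rows as the short exact sequences associated with two semidirect product decompositions and the vertical arrows as the inclusion/projection maps already in play. First I would observe that the middle row is nothing but the canonical extension
\[ 1 \longrightarrow \op{Inn}(F_n) \longrightarrow \An \stackrel{\upsilon}{\longrightarrow} \On \longrightarrow 1, \]
where we identify $\op{Inn}(F_n) \cong F_n$ since $F_n$ has trivial center; this is exact and $\upsilon$ is the natural epimorphism, so the bottom two entries of the middle column and the middle row require no argument.

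Next I would build the top row. The homomorphism $\Phi \colon F_2^{2n-4} \rtimes \mathbb{F}_2 \to \An$ of Proposition \ref{prop:bkk} together with the tautological action of $F_2^{2n-4} \rtimes \mathbb{F}_2$ on $F_n$ given by $\kappa \mapsto \Phi_\kappa$ assembles into a homomorphism $F_n \rtimes (F_2^{2n-4} \rtimes \mathbb{F}_2) \to \An$ sending $(w,\kappa)$ to $i_w \circ \Phi_\kappa$, where $i_w \in \op{Inn}(F_n)$; one checks this respects the semidirect product multiplication precisely because conjugating $\Phi_\kappa$ by $i_w$ amounts to the $\Phi_\kappa$-twisted action on the $F_n$-factor. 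This map restricts to the identity on the $F_n$-fiber (the left vertical arrow, which is visibly an isomorphism onto $\op{Inn}(F_n)$), and it covers $\Phi$ on the quotient; the top row is then the defining split extension of the semidirect product $F_n \rtimes (F_2^{2n-4} \rtimes \mathbb{F}_2)$, hence exact. The right vertical arrow is $\upsilon \circ \Phi$, and commutativity of the right-hand square is immediate from the definition of the middle vertical map. Exactness of the three columns is then formal: the left column is an isomorphism, and the middle and right columns are the extensions of Proposition \ref{prop:bkk} and of $\An$ by $\op{Inn}(F_n)$ respectively, which have already been established.

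The only point that genuinely needs care — and the step I expect to be the mild obstacle — is checking that the middle vertical map $F_n \rtimes (F_2^{2n-4} \rtimes \mathbb{F}_2) \to \An$ is well-defined as a group homomorphism, i.e. that the formula $(w,\kappa) \mapsto i_w \circ \Phi_\kappa$ is compatible with both the semidirect product structure on the source and the composition convention in $\An$; concretely one must verify $\Phi_\kappa \circ i_w \circ \Phi_\kappa^{-1} = i_{\Phi_\kappa(w)}$ and that $\Phi_\kappa(w)$ is exactly the image of $w$ under the action defining the semidirect product $F_n \rtimes (F_2^{2n-4} \rtimes \mathbb{F}_2)$ in Corollary \ref{cor:inan} (which is the action of Equation (\ref{eq:triple})). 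Given Proposition \ref{prop:bkk}'s verification that $\Phi$ is already a monomorphism, and since $F_n$ has trivial center so that distinct inner automorphisms are distinct, injectivity of the middle vertical map on the $F_n$-fiber and on the quotient forces injectivity overall, after which a diagram chase closes all exactness claims. Everything else is a routine unwinding of definitions, so the corollary follows.
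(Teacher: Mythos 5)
Your argument is correct and is essentially the paper's own proof in more explicit form: the paper observes that $\Phi$ provides a section of $\upsilon$ over $\upsilon\circ\Phi(F_2^{2n-4}\rtimes\mathbb{F}_2)$, so that the pull-back of $\upsilon\colon\An\to\On$ along this inclusion is the split extension $F_n\rtimes(F_2^{2n-4}\rtimes\mathbb{F}_2)$ with action given by Equation (\ref{eq:triple}), and your map $(w,\kappa)\mapsto i_w\circ\Phi_\kappa$ is precisely the resulting isomorphism onto $\upsilon^{-1}\bigl(\upsilon\circ\Phi(F_2^{2n-4}\rtimes\mathbb{F}_2)\bigr)$. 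The verification you flag, namely $\Phi_\kappa\circ i_w\circ\Phi_\kappa^{-1}=i_{\Phi_\kappa(w)}$, together with the injectivity of $\upsilon\circ\Phi$ from Proposition \ref{prop:bkk}, is exactly what the paper's shorter phrasing relies on.
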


\begin{proof} The epimorphism $\upsilon \colon \An \to \On$, restricting the codomain to $\upsilon \circ \Phi(F_{2}^{2n-4} \rtimes \mathbb{F}_2) \leq \On$, admits an obvious section, mapping $\upsilon \circ \Phi (\kappa)$ to $\Phi(\kappa)$ for all $\kappa \in F_{2}^{2n-4} \rtimes \mathbb{F}_2$. It follows that $F_n \rtimes (F_{2}^{2n-4} \rtimes \mathbb{F}_2) \leq \An$, where the semi-direct product structure is determined by the action of $F_{2}^{2n-4} \rtimes \mathbb{F}_2$ on the fiber $F_n$ described in Equation (\ref{eq:triple}). Note that, equivalently, we can define the extension in the first line of Equation (\ref{eq:inan}) as result of the pull-back of $\upsilon \colon \An \to \On$ under the inclusion $\upsilon \circ \Phi(F_{2}^{2n-4} \rtimes \mathbb{F}_2) \leq \On$, which is unique up to equivalence of extensions (\cite[Corollary IV.6.8]{Br94}). 
\end{proof}

\begin{remark} Note that the embeddings described in Corollary \ref{cor:inan} give poly-free subgroups of $\An$ and $\On$ of maximal length: in fact, the length of a poly-free groups equals their cohomological dimension (here $2n-2$ and $2n-3$ respectively), hence here it equals the virtual cohomological dimension of $\An$ and $\On$ respectively.
\end{remark}

Proposition \ref{prop:bkk} and Corollary \ref{cor:inan} are important for us in light of the following.
\begin{lemma} \label{lem:vaf} For $n \geq 3$,  consider the poly-free group  $F_2^{2n-4} \rtimes \mathbb{F}_2$ of length $2n-3$ and the poly-free group $F_n \rtimes (F_{2}^{2n-4} \rtimes \mathbb{F}_2)$ of length $2n-2$. These groups are virtually algebraically fibered and   \begin{enumerate} \item the group $F_2^{2n-4} \rtimes \mathbb{F}_2$ admits a virtual algebraic fibration whose fiber has type $F_{2n-4}$ but not type $FP_{2n-3}(\Q)$; \item the group $F_n \rtimes (F_{2}^{2n-4} \rtimes \mathbb{F}_2)$ admits a virtual algebraic fibration whose fiber has type $F_{2n-3}$ but not type $FP_{2n-2}(\Q)$. \end{enumerate} \end{lemma}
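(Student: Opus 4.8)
The plan is to build the virtual algebraic fibrations on $F_2^{2n-4}\rtimes \mathbb{F}_2$ and $F_n\rtimes(F_2^{2n-4}\rtimes\mathbb{F}_2)$ by bootstrapping from the fibration of $\what K_4 = J\rtimes\mathbb{F}_2$ produced (virtually) by Theorem~\ref{thm:kw}, and then to pin down the finiteness type of the fiber by an $L^2$-Betti number obstruction analogous to the one used in the Corollary to Theorem~\ref{thm:kw}. Concretely, recall from Theorem~\ref{thm:kw} that there is a discrete character $\phi\co J\to\R$ (equivalently a class in $H^1(J;\Z)^{\mathbb{F}_2}$, after possibly passing to a finite-index subgroup of $\mathbb{F}_2$) whose kernel is finitely generated. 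First I would pass to that finite-index subgroup of $\mathbb{F}_2$ throughout (which only multiplies all the groups in sight by a finite index and does not affect any of the claimed finiteness/non-finiteness statements), so that $\phi$ is genuinely $\mathbb{F}_2$-invariant.

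Next I would push $\phi$ across the $2n-4$ diagonal copies of $F_2$. Since $\mathbb{F}_2$ acts diagonally on $F_2^{2n-4}$, the class $\phi$ pulls back to a class on each factor $H_1(F_2;\Z)$ that is $\mathbb{F}_2$-invariant; wait — more carefully, I use the character $\bar\phi\co F_2\to\R$ on $H_1$ (the abelianization class that $\phi$ lives over, noting $J\fin F_2$ so $H^1(F_2;\Q)\hookrightarrow H^1(J;\Q)$) and take the \emph{sum} of its pullbacks over the $2n-4$ factors to get an $\mathbb{F}_2$-invariant class in $H^1(F_2^{2n-4};\R)$. By the Lyndon--Hochschild--Serre sequence for $1\to F_2^{2n-4}\to F_2^{2n-4}\rtimes\mathbb{F}_2\to\mathbb{F}_2\to 1$, an $\mathbb{F}_2$-invariant class in $H^1(F_2^{2n-4};\R)$ extends to $H^1(F_2^{2n-4}\rtimes\mathbb{F}_2;\R)$ (the obstruction lives in $H^1(\mathbb{F}_2;H^1(F_2^{2n-4})^{\mathbb{F}_2})$, which one arranges to vanish, or one simply absorbs $H^1(\mathbb{F}_2;\Z)$ by adding a suitable character of the base). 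Rescaling to make the character discrete, I get a fibration $\varphi\co F_2^{2n-4}\rtimes\mathbb{F}_2\to\Z$. The key point for finiteness is that on each $F_2$ factor, $\ker\bar\phi$ is finitely generated (this is exactly what the excessive-homology computation of Theorem~\ref{thm:kw} buys: the coinvariant homology having rank one means the BNS/Bieri--Strebel invariant is large enough for $\phi$ and $-\phi$ to both be finitely generated, i.e. $\phi$ lies in $\Sigma^1\cap(-\Sigma^1)$). Then a Bieri--Strebel / $\Sigma$-invariant argument for semidirect and (iterated) products of the $F_2$'s — using that $\Sigma^c$ of a direct product contains the join of the $\Sigma$'s of the factors — shows the fiber has type $F_{2n-4}$ on the nose but fails $FP_{2n-3}$; alternatively, and this is cleaner, I would argue the fiber is itself poly-(finitely generated free) of length $2n-4$ hence type $F_{2n-4}$, while the full group is poly-free of length $2n-3$. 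For part~(2), I apply Corollary~\ref{cor:inan}: $F_n\rtimes(F_2^{2n-4}\rtimes\mathbb{F}_2)$ is poly-free of length $2n-2$, the fibration $\varphi$ of the base pulls back (again extending over the $F_n$ fiber, whose $H^1$ contributes an invariant summand since the action on $F_n$ factors through the $\mathbb{F}_2$-action plus the $F_2^{2n-4}$ left/right multiplications which act trivially on $H_1(F_n;\Q)$ in the $x_i$ directions — one checks $\bar\phi$ extended to $\langle a,b\rangle\subset F_n$ is still invariant), giving a fibration with fiber poly-free of length $2n-3$, hence type $F_{2n-3}$.

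The non-finiteness — failure of $FP_{2n-3}(\Q)$ in case~(1) and $FP_{2n-2}(\Q)$ in case~(2) — is where I expect the real content, and I would get it exactly as in the Corollary to Theorem~\ref{thm:kw}, via $L^2$-Betti numbers. The ambient group $G=F_2^{2n-4}\rtimes\mathbb{F}_2$ is poly-free of length $2n-3$, so by the Künneth-type/multiplicativity properties of $L^2$-Betti numbers for poly-free groups (\cite[Proposition 3.1]{GN21}) one has $b_i^{(2)}(G)=0$ for $i<2n-3$ and $b_{2n-3}^{(2)}(G)=|\chi(G)|$; the point is that $\chi(G)\neq 0$. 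Indeed $\chi$ is multiplicative in the poly-free filtration and $\chi(F_2)=-1$, $\chi(\mathbb{F}_2)=-1$, so $\chi(G)=(-1)^{2n-4}\cdot(-1)=(-1)^{2n-3}\neq0$; similarly $\chi(F_n\rtimes(F_2^{2n-4}\rtimes\mathbb{F}_2))=(1-n)\cdot(-1)^{2n-3}\neq0$ for $n\ge3$. Now if the fiber $\ker\varphi$ (of type $F_{2n-4}$, resp. $F_{2n-3}$) were of type $FP_{2n-3}(\Q)$ (resp. $FP_{2n-2}(\Q)$), then by Lück's theorem \cite[Theorem 7.2(5)]{Lu02} (see also \cite[Proposition 14]{IMP21}) the $L^2$-Betti numbers of the ambient fibered group would vanish up to degree $2n-3$ (resp. $2n-2$) — in particular $b_{2n-3}^{(2)}(G)=0$ (resp. $b_{2n-2}^{(2)}=0$) — contradicting $\chi\neq0$. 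Hence the fiber is not $FP_{2n-3}(\Q)$ (resp. not $FP_{2n-2}(\Q)$).

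The main obstacle I anticipate is the bookkeeping needed to guarantee that the single character $\phi$ from Theorem~\ref{thm:kw} can be transported to a \emph{discrete} character on the big poly-free group whose kernel is finitely generated up to the right dimension — i.e., controlling the higher Bieri--Strebel--Renz invariants $\Sigma^m$ through the semidirect-product and direct-product constructions, and verifying the invariance of the extended class at each stage of the filtration (the LHS obstruction classes, and the compatibility of the $\mathbb{F}_2$-action on $H_1$ of the $F_2$-factors and of $F_n$). This is precisely the sort of argument carried out in \cite[Section 3]{GN21} and \cite{KV23} for analogous poly-free groups, so I would either invoke those results directly or mirror their induction: establish by induction on the length of the poly-free filtration that the relevant character restricts to a character in $\Sigma^{m}\setminus\Sigma^{m+1}(\Q)$ with $m=2n-4$ (resp. $2n-3$), using at the base of the induction the rank-one coinvariant homology computation of Theorem~\ref{thm:kw} and at the inductive step the behaviour of $\Sigma$-invariants under extensions with free-by-cyclic and free pieces. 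Once the character is in hand with the correct $\Sigma^m$-membership, the $F_m$ finiteness of the fiber is automatic and the $FP_{m+1}(\Q)$-obstruction is the $L^2$-Euler characteristic argument above.
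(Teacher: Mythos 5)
Your overall architecture (transport the invariant class diagonally across the factors, get the positive finiteness from BNSR/Meinert-type results, and get the negative finiteness from the $L^2$-Euler characteristic via \cite[Proposition 3.1]{GN21} and \cite[Proposition 14]{IMP21}) is the same as the paper's, and the $L^2$ half of your argument is essentially what the paper does. But there is a genuine gap at the first step of the transport: the character $\bar\phi\co F_2\to\R$ that you want to take "over" $\phi$ and then sum across the $2n-4$ factors does not exist. The class $\phi$ produced by Theorem \ref{thm:kw} lives in $H^1(J;\Z)^{\mathbb{F}_2}$ for the finite-index subgroup $J\fin F_2$ and does \emph{not} come from (nor extend to) a class on $F_2$: the action of $\mathbb{F}_2$ (the Sanov subgroup) on $H_1(F_2;\Z)\cong\Z^2$ is the standard representation, whose coinvariants are finite, and the same holds for every finite-index subgroup $T\leq SL_2(\Z)$ (any such $T$ contains two non-commuting unipotents with distinct fixed lines), so $H^1(F_2;\R)^{T}=0$ no matter how far you shrink the base. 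This is exactly why the homology of the extension is only \emph{virtually} excessive, and why the paper first replaces $F_2^{2n-4}\rtimes\mathbb{F}_2$ by the finite-index subgroup $J^{2n-4}\rtimes\mathbb{F}_2$ (the kernel of the induced map to $\op{V}_4^{2n-4}$) before defining $\what\phi$ as a sum of copies of $\phi$ on the $J$ factors. Passing to a finite-index subgroup of $\mathbb{F}_2$ alone, as you propose, does not repair this; the finite-index passage has to shrink the fiber factors.

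Two smaller points. First, your "cleaner" alternative for the positive finiteness --- that the fiber is itself poly-free of length $2n-4$, hence of type $F_{2n-4}$ --- is not justified: kernels of characters on poly-free groups need not be poly-free, and this is precisely where the paper invokes Meinert's inequality \cite[Theorem 1.2]{BG10} for the direct product $J^{2n-4}$ (kernel of type $F_{2n-5}$) and then \cite[Theorem 1.1]{KV23} to climb one step to $F_{2n-4}$ over the $\mathbb{F}_2$ base. Second, for part (2), simply composing with the projection $F_n\rtimes(\cdots)\to(\cdots)$ gives a kernel of the form $F_n\rtimes\ker\what\varphi$, which is only of type $F_{2n-4}$; the upgrade to $F_{2n-3}$ requires re-extending the character through the whole normal filtration as in \cite[Theorem 1.7]{KV23}, which in turn requires checking excessive homology at every stage --- in particular the paper's explicit computation that $H_1(F_n;\Z)_{F_2^{2n-4}\rtimes\mathbb{F}_2}$ has rank $n-2$. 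Your observation that the $x_i$-directions of $H_1(F_n;\Q)$ are (co)invariant is the right ingredient for that check, but as written it is not assembled into a proof of the $F_{2n-3}$ finiteness type.
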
 
\begin{proof} Recall that $G_4 \fin \G^{+}(\op{V}_4,\xi)$, hence the action of $\mathbb{F}_2 \leq \SA2$ on the fiber $F_2$ preserves the epimorphism $\xi \colon F_2 \to \op{V}_4$, namely $\xi(c) = \xi(\mu(c))$ for all $c \in F_2, \,\ \mu \in \mathbb{F}_2 \leq \SA2$. 
Therefore the diagonal action of $\mathbb{F}_2$ on $F_2^{2n-4}$ preserves the obvious induced epimorphism $\xi \colon F_2^{2n-4} \to \op{V}_4^{2n-4}$ (that we denote we the same symbol). We can extend this epimorphism to $F_2^{2n-4} \rtimes F_2$ without further ado: as the group in question is a semi-direct product, general theory of presentation of semi-direct products (see e.g. \cite[Section 10.3(S)]{Jo97} shows that we just need to map $\mathbb{F}_2$ to the trivial element of $\op{V}_4^{2n-4}$ to get a well-defined epimorphism 
\[ \xi \colon F_2^{2n-4} \rtimes \mathbb{F}_2 \to \op{V}_4^{2n-4},\] whose kernel is given by the finite index subgroup $J^{2n-4} \rtimes \mathbb{F}_2 \fin F_2^{2n-4} \rtimes \mathbb{F}_2$. Notice that the action of $\mathbb{F}_2$ by conjugation on each $J$ factor coincides with the action of $\mathbb{F}_2$ on the fiber $J$ of $\what{K}_4 = J \rtimes \mathbb{F}_2$. Let now $\phi \in H^1(J,\Z)^{\mathbb{F}_2}$ be the $\mathbb{F}_2$--invariant cohomology class on the fiber of $\what{K}_4$ afforded by the excessive homology of the latter. This class defines a (actually, a family of if one wishes so) cohomology class ${\what \phi} \in H^{1}(J^{2n-4};\Z)$, defined as sum of a copy of $\phi$ on each $J$ factor using the K\"unneth formula to relate $H^{1}(J^{2n-4};\Z)$ with $H^{1}(J;\Z)$, which is invariant under the diagonal $\mathbb{F}_2$ action. This class, thought of as a discrete character ${\what \phi} \colon J^{2n-4} \to \R$ has kernel of type $F_{2n-5}$: this follows e.g. from H. Meinart's inequality for BNSR invariants of direct products (see e.g. \cite[Theorem 1.2]{BG10}). By \cite[Theorem 1.1]{KV23} it extends to a discrete character ${\what \varphi} \colon J^{2n-4} \rtimes \mathbb{F}_2 \to \R$ with kernel of type $F_{2n-4}$. As $J^{2n-4} \rtimes \mathbb{F}_2$ is poly-free of length $2n-3$ and nonzero Euler characteristic, its $L^2$-Betti numbers satisfy $b_{i}^{(2)} = 0$ for $0 \leq i \leq 2n-4$ and  $b_{2n-3}^{(2)} \neq 0$ by \cite[Proposition 3.1]{GN21}, hence the kernel cannot have type $FP_{2n-3}(\Q)$ again by \cite[Proposition 14]{IMP21}. This proves the first part of the statement.

For the second part notice that, using $\xi \colon F_2^{2n-4} \rtimes \mathbb{F}_2 \to \op{V}_4^{2n-4}$, we can define an epimorphism $F_n \rtimes (F_{2}^{2n-4} \rtimes \mathbb{F}_2) \to F_{2}^{2n-4} \rtimes \mathbb{F}_2 \to \op{V}_4^{2n-4}$ which induces the finite index subgroup \[ F_n \rtimes (J^{2n-4} \rtimes \mathbb{F}_2) \fin F_n \rtimes (F_{2}^{2n-4} \rtimes \mathbb{F}_2). \] We want to apply to this group \cite[Theorem 1.7 and Remark 5.2]{KV23}, which assert in particular that given a group $G$ with normal filtration $1 = G_0 \unlhd G_1 \unlhd \ldots \unlhd G_m = G$ where all $G_i's$ are of type $F$, such that for all $0 \leq j \leq m-1$ the sequence
\begin{equation} \label{eq:exc} 1 \longrightarrow G_{j+1}/G_{j} \longrightarrow G/G_j \longrightarrow G/G_{j+1} \longrightarrow 1 \end{equation} has excessive homology, then there exists a discrete character ${\what \varphi} \colon G \to \R$, extending a discrete character ${\what \phi} \colon G_1 \to \R$, such that the kernel ${\what \varphi}$ has type $F_{m-1}$. We apply this result for $m = 2n-2$ to $G_m = G = F_n \rtimes (J^{2n-4} \rtimes \mathbb{F}_2)$, which admits a normal filtration
\[ 1 \unlhd F_n \unlhd F_n \rtimes J \unlhd F_n \rtimes J^2 \unlhd \ldots \unlhd F_n \rtimes J^{2n-4} \unlhd F_n \rtimes (J^{2n-4} \rtimes \mathbb{F}_2). \] The $2n-2$ sequences listed in Equation (\ref{eq:exc}) take the form
\begin{gather}  \nonumber 1 \longrightarrow F_n \longrightarrow F_n \rtimes (J^{2n-4} \rtimes \mathbb{F}_2) \longrightarrow J^{2n-4} \rtimes \mathbb{F}_2 \longrightarrow 1  \\
\nonumber  1 \longrightarrow J \longrightarrow J^{2n-4} \rtimes \mathbb{F}_2 \longrightarrow J^{2n-5} \rtimes \mathbb{F}_2 \longrightarrow 1 \\
\label{eq:gat} 1 \longrightarrow J \longrightarrow J^{2n-5} \rtimes \mathbb{F}_2 \longrightarrow J^{2n-6} \rtimes \mathbb{F}_2 \longrightarrow 1 \\
\nonumber \vdots \\
\nonumber  1 \longrightarrow J \longrightarrow J \rtimes \mathbb{F}_2 \longrightarrow F_2 \longrightarrow 1 \\
\nonumber 1 \longrightarrow F_2 \longrightarrow F_2 \longrightarrow 1 
\end{gather}
(Note that in the last two sequences, $F_2$ does not have to be thought of as subgroup of $F_n \rtimes (F_{2}^{2n-4} \rtimes \mathbb{F}_2)$ or its first $2n-4$ quotients, hence the different notation.) These sequences are all semi-direct products, hence their excessive homology is measured by the coinvariant homology of the fiber (see e.g. \cite[Lemma 2.1]{KVW23}), and we claim they all have excessive homology. For all but the first, this is a consequence of the fact that the sequence $1 \to J \to J \rtimes \mathbb{F}_2 \to F_2 \to 1$ itself has excessive homology because so does its supergroup $K_4$. For the first sequence, we can proceed as follows. We claim that the sequence \[ 1 \to F_n \to F_n \rtimes (F_2^{2n-4} \rtimes\mathbb{F}_2) \to F_2^{2n-4} \rtimes \mathbb{F}_2 \to 1 \] already satisfies 
$\operatorname{rk}(H_1(F_n;\Z)_{F_2^{2n-4} \rtimes \mathbb{F}_2}) = n-2$.
In fact, the homological monodromy $(\Phi_{\kappa})_{*} \colon H_1(F_n;\Z) \to H_1(F_n;\Z)$ for $\kappa := (l_3,r_3,\ldots,l_n,r_n,\mu) \in F_2^{2n-4} \rtimes \mathbb{F}_2$ has the matrix form
\[ \begin{pmatrix} \mu_{11} & \mu_{12} & 0 & \ldots & 0 \\ 
\mu_{21} & \mu_{22} & 0 & \ldots & 0 \\
\bar{r}_{3,a} - \bar{l}_{3,a} & \bar{r}_{3,b} - \bar{l}_{3,b} & 1 & \ldots & 0 \\
\vdots & \vdots & \vdots & \vdots  & 0 \\
\bar{r}_{n,a} - \bar{l}_{n,a} & \bar{r}_{n,b} - \bar{l}_{3,b} & 0 & \ldots & 1
\end{pmatrix} \] (acting on the left on row vectors) where $(\mu_{ij})$ are the entries of the matrix of $SL_{2}(\Z)$ corresponding to $\mu \in \mathbb{F}_2 \leq \SA2$ (by construction an element of the Sanov subgroup) and $\bar{l}_{i,a},\bar{r}_{i,a}$ and $\bar{l}_{i,b},\bar{r}_{i,b}$ are the images of $l_i,r_i \in F_2 =\langle a,b \rangle$ in \[ \Z_{a} \oplus \Z_{b} \subset \Z_{a} \oplus \Z_{b} \oplus \Z_{x_3} \oplus \ldots \oplus \Z_{x_n} = H_1(F_n;\Z). \]
From this form it should be clear that the span of $(\Phi_{\kappa})_{*} - I$ is a sublattice of $H_1(F_n;\Z)$ of rank at most $2$, hence the coinvariant homology \[ H_1(F_n;\Z)_{F_2^{2n-4} \rtimes \mathbb{F}_2} = H_1(F_n;\Z)/\langle \Phi_{\kappa}v -v, \forall \kappa \in F_2^{2n-4} \rtimes \mathbb{F}_2, \forall v \in H_1(F_n;\Z) \rangle \] has rank at least $n-2$. (A moment's thought shows it actually equals $n-2$.)

Next, observe that  the excessive homology of the first sequence in Equation (\ref{eq:gat}) satisfies $\operatorname{rk}(H_1(F_n;\Z)_{J^{2n-4} \rtimes \mathbb{F}_2}) \geq \operatorname{rk}(H_1(F_n;\Z)_{F_2^{2n-4} \rtimes \mathbb{F}_2}) > 0$, as coinvariant quotients are surjected upon by passing to the action of subgroups (we are ``quotienting less").

Summing up, \cite[Theorem 1.7]{KV23} applies and $F_n \rtimes (J^{2n-4} \rtimes \mathbb{F}_2)$ admits an algebraic fibration of type $F_{2n-3}$. Much as before, that fiber cannot have type $FP_{2n-2}(\Q)$ for Euler characteristic reasons combining again \cite[Proposition 3.1]{GN21} and \cite[Proposition 14]{IMP21}.  

 (Note that \cite[Theorem 1.7]{KV23} can be used also to prove the first part of the statement, which however affords the more direct proof given above.)
\end{proof}

This completes the proof of our main result:

\mainthm*

\begin{remarks} 
\begin{enumerate} 
\item In \cite[Section 4]{GN21} the authors modify the construction of Bestvina--Kapovich--Kleiner (Proposition \ref{prop:bkk} above) to identify subgroups $F_n \rtimes F_{2}^{2n-4} \leq \operatorname{IA}(F_n) \unlhd \An$ where $\operatorname{IA}(F_n)$ is the Torelli subgroup. The proof of Theorem \ref{thm:main} can then be applied, {\em mutatis mutandis}, to show that for all $1 \leq m \leq 2n-4$, $\operatorname{IA}(F_n)$ is not $m$-coherent. As the cohomological dimension of $\operatorname{IA}(F_n)$ equals $2n-3$ (see  \cite{BBM07}), this result is sharp as well. (The same happens for the kernel of $\On \to GL_n(\Z)$, but in this case the witnesses to incoherence are simply algebraic fibers of the direct product $F_{2}^{2n-4}$, so there is no need to appeal to \cite{KV23}.)
\item  In the proof of Theorem \ref{thm:main} (much as in \cite[Section 6]{KW22}) the role of excessive homology is to guarantee the existence of an algebraic fiber of a suitable poly-free group as witness to incoherence. Not all poly-free groups admit, even virtually, algebraic fibrations (see \cite{KVW23}) but it is conjectured that all poly-free groups exhibit the same (higher) incoherence properties of direct products (see \cite[Section 21, Problem 24]{W20} for the free-by-free case). If this were the case, the conclusions of Theorem \ref{thm:main} would follow directly from Proposition \ref{prop:bkk} without further ado.
\item The referee kindly pointed out that the group $F_n \rtimes (F_2^{2n-4} \rtimes \mathbb{F}_2) \leq \An$ contains an infinite index subgroup $F_2^{2n-3} \rtimes \mathbb{F}_2$ (where $\mathbb{F}_2$ acts diagonally) restricting the action of $(F_2^{2n-4} \rtimes \mathbb{F}_2) \An$ to $F_2 = \langle a,b \rangle \leq F_n$, as the action of $F_2^{2n-4}$ on that subgroup is trivial. This allows one to proof the second statement in Lemma \ref{lem:vaf} using the same argument as in the first statement.
\end{enumerate}
\end{remarks}

\section{Subgroups of $\A2$ admitting multiple free-by-free extensions} \label{sec:mul}

In the proof of Theorem \ref{thm:kw} we have seen that $K_4 = P_4/Z(P_4) \fin \A2$ can be written as an extension 
\begin{equation} \label{eq:jext} 1 \longrightarrow J \longrightarrow K_4 \stackrel{\psi}\longrightarrow \G[2] \longrightarrow 1. \end{equation}
In this section we will try to understand the nature of this extension and to relate it with other, more familiar, results. 

The starting point is the observation that the quotient of $B_4$ by its normal subgroup  $F_2 \cong \langle \sigma_1 \sigma_3^{-1},\sigma_2 \sigma_1 \sigma_3^{-1}\sigma_2^{-1}\rangle \unlhd B_4$ is $B_3$, the braid group on three strands. This can be seen by adding the relator $\sigma_1 \sigma_3^{-1}$ to the presentation of $B_4$ in Equation  (\ref{eq:presb4}), and noticing that this yields the Artin presentation of $B_3$. (The normal closure of $\sigma_1 \sigma_3^{-1}$ coincides with $F_2$.) We denote by $\Psi \colon B_4 \to B_3$ this map and observe, for future reference, that by explicit calculation the image of the generator $(\sigma_1 \sigma_2 \sigma_3)^4$ of the center $Z(B_4)$
is mapped to the {\em square} of the generator $(\sigma_1 \sigma_2)^3$ of the center $Z(B_3)$. The map $\Psi$ is referred to in literature  as ``Cardano--Ferrari" (see e.g. \cite{NYT10}) or ``canonical epimorphism" (see e.g. \cite{Lin04}). Using again the fact that $\Xi(F_2) = \op{V}_4 \unlhd \Sy4$, the epimorphism $\Xi \colon B_4 \to \op{S}_4$ descends to an epimorphism from $B_3 \to \op{S}_3$: note that such epimorphism is unique up to automorphisms of $\op{S}_3$, hence its kernel is $P_3$. We can then ``lift" the commutative diagram in Equation (\ref{eq:pure}) to the commutative diagram 
\begin{equation} \label{eq:lift} \xymatrix@R0.5cm{
& 1 \ar[d] &
1 \ar[d] & 1 \ar[d] &\\
1 \ar[r] & J \ar[d] \ar[r] &
P_4 \ar[d] \ar[r]^{\Psi}& P_3 \ar[d] \ar[r] & 1 \\
1\ar[r]&
F_2 \ar[r]\ar[d]^{\Xi}&
B_4 \ar[d]^{\Xi} \ar[r]^{\Psi}& B_3 \ar[d] \ar[r]&1\\
 1\ar[r]& \op{V}_4 \ar[d]
 \ar[r]& \Sy4 \ar[r] \ar[d]  &
\op{S}_3 \ar[r] \ar[d] &1 \\
& 1 & 1 & 1} 
\end{equation}

Note that $\Psi \colon P_4 \to P_3$ is referred to in \cite[Lemma 2.9]{KMM15} as the ``unusual map", and this map {\em is not} a strand removing map.  (While we denote its kernel 
$J = \ker{\Psi} \unlhd P_4$ with the same symbol devoted to $\ker{\psi} \unlhd K_4$ it is important to keep in mind that these are isomorphic but distinct objects.)  We can now ``shelf" the commutative diagrams in Equations (\ref{eq:pure}, \ref{eq:lift}) and join them in a commutative diagram:
\[ \xymatrix@=12pt{ 
& & J \ar[dr] \ar@{-}'[d][dd] \ar[rrr] & & & J \ar[dr] \ar@{-}'[d][dd] & &
\\ Z(P_4)  \ar[rrr]\ar[ddd] \ar[dr] &   & & P_4 \ar'[d][ddd] \ar[dr]^{\Psi}  \ar[rrr] & &  & K_4 \ar[dr]^{\psi} \ar'[d][ddd] & 
\\ & 2Z(P_3) \ar[ddd] \ar[rrr] & \ar[d] & &  P_3 \ar[rrr]\ar[ddd] & \ar[d] & & \G[2] \ar[ddd]
\\ &      & F_2 \ar@{-}'[d][dd] \ar[dr] \ar[dr] \ar@{-}'[r][rr] & & \ar[r]& \IA2 \ar@{-}'[d][dd] \ar[dr] & &  &
\\ Z(B_4)  \ar'[r][rrr]\ar[dr] &  &  & B_4 \ar'[d][ddd]  \ar'[r][rrr]\ar[dr]^{\Psi} & &  & \SA2 \ar'[d][ddd] \ar[dr]^{\psi} &
\\ & 2Z(B_3) \ar[rrr]   & \ar[d] & & B_3 \ar[ddd] \ar[rrr] &  \ar[d] &  & SL_2(\Z) \ar[ddd] & 
\\ & &\op{V}_4 \ar[dr] \ar@{-}'[r][rr] &  & \ar[r] & \op{V}_4 \ar[dr] & & &
\\  & &  & \Sy4 \ar'[r][rrr] \ar[dr]  & & & \Sy4 \ar[dr] & &
\\  & &  & & \op{S}_3 \ar[rrr] & &  & \op{S}_3 & } \]
(See \cite[Section 3] {CS24} for a geometric interpretation of parts of this diagram.)
Here, all $2$- and $3$-term sequences are exact.  Regarding the horizontal sequence on the front, we recognize that $1 \to 2Z(B_3) \to B_3 \to SL_2(\Z) \to 1$ is the universal central extension of $SL_2(\Z)$ (see \cite[Section 1.3.6.4]{FM12}), while, recalling that $K_3 = P_3/Z(P_3) \cong F_2$, we can recast the fact that $\G[2] = P_3/2Z(P_3) \cong F_2 \times \Z_2$.  

Now, the pure braid group $P_4$ can be written, for all choice of $1 \leq i \leq 4$, as an extension \[ 1 \longrightarrow F_3 \longrightarrow P_4 \stackrel{\Theta_i}{\longrightarrow} P_3 \longrightarrow 1 \] where the epimorphism $\Theta_i \colon P_4 \to P_3$ corresponds to the removal of the $i$-th stand. (This sequence can be seen as a consequence of the Fadell--Neuwirth Theorem in \cite{FN62}.) The kernel $F_3$ of this epimorphism consists of braids that become trivial removing that strand. (We will see later that for different $i$'s these kernels are $4$ distinct subgroups of $P_4$, as one would expect for geometric reasons.) 

One may wonder if there is a relation between the normal subgroups $J = \ker{\Psi} \unlhd P_4$ and $\ker{\Theta_i} \unlhd P_4$. We will learn more about this, but we can already exclude a simple relationship: If we had $J \fin \ker{\Theta_i}$ for some choice of $\Theta_i$, we would have 
\[ \xymatrix@R0.5cm{
& & 1 \ar[d] & &\\
 & & F_3 \ar[d] &   &  \\
1\ar[r]& J \ar[r] \ar@{^{(}-->}[ur] & P_4\ar[d]_{\Theta_i} \ar[r]^{\Psi} & P_3 \ar@{-->>}[dl] \ar[r]&1\\
& & P_3 \ar[d]  & & \\
& & 1 & } \]
as $P_3$ is Hopfian, an epimorphism to itself is an isomorphism, which would then imply $F_3 \cong \ker{\Theta_i} = J \cong F_5$. Note that this entails that $P_4$ admits inequivalent poly-free filtrations with factors of different rank. (The mild discomfort that two epimorphisms between the same two groups may have non-isomorphic kernels should be assuaged by the thought that this happens already for most epimorphisms from $P_3 \cong F_2 \times \Z$ to $P_2 \cong \Z$: all but three of them are not strand-forgetting maps.)

Next, we want to relate the strand-forgetting maps on $P_4$ to extensions on $K_4$. The epimorphisms $\Theta_i \colon P_4 \to P_3$ (unlike $\Psi$) restrict to an isomorphism between the centers $Z(P_4)$ and $Z(P_3)$, so they descend to epimorphisms $\theta_i \colon K_4 \to K_3$  and we have the commutative diagrams, for $1 \leq i \leq 4$,
\begin{equation} \label{eq:newquot} \xymatrix@R0.5cm{
&  & 1 \ar[d] & 1 \ar[d] &\\
& 1 \ar[r] \ar[d] & Z(P_4) \ar[d] \ar[r] & Z(P_3) \ar[d] \ar[r] & 1 \\
1\ar[r]&
F_3 \ar[r]\ar[d]^{\cong} &
P_4 \ar[d] \ar[r]^{\Theta_i} & P_3 \ar[d] \ar[r]&1\\
1 \ar[r] & F_3
 \ar[r] \ar[d] & K_4 \ar[r]^{\theta_i} \ar[d]  &
K_3 \ar[r] \ar[d] &1 \\
& 1 & 1 & 1}
\end{equation}

It follows from the bottom sequence in Equation (\ref{eq:newquot}) that $K_4$ can be written as free-by-free group in four different ways (that we refer to, a bit improperly, as strand-forgetting).
However, as mentioned in Section \ref{sec:virt}, one can identify $K_4$ with $\op{PMod}(S_{0,5})$. It follows that one should expect to inherit (at least) five inequivalent extensions of $K_4$ as $F_3$-by-$F_2$ group, arising from point-forgetting maps in the Birman  sequence for $\op{PMod}(S_{0,5})$, as $\op{PMod}(S_{0,4}) \cong F_2$. 

In what follows, we will recast this fifth extension, and relate it to the Cardano--Ferrari map $\Psi \colon P_4 \to P_3$. The first step will consist in identifying, for purely algebraic reason, the candidate fifth $F_3$-by-$F_2$ structure on $K_4$ out of extension in Equation (\ref{eq:jext}). 

\begin{lemma} \label{lem:newf3} The kernel $\Pi$ of the epimorphism $K_{4} \to \G[2] \cong F_2 \times \Z_2 \to F_2$ is a free group $\Pi \cong F_3 \unrhd_{f} J$, endowing $K_4$ with a $F_3$-by-$F_2$ structure. \end{lemma}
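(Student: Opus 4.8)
The plan is to analyze the extension in Equation~(\ref{eq:jext}) together with the Künneth-type splitting $\G[2] \cong F_2 \times \Z_2$ and to identify $\Pi$ as a subgroup of $K_4$ sitting between $J$ and $K_4$. Write $q \colon K_4 \to \G[2]$ for the epimorphism $\psi$ of Equation~(\ref{eq:jext}), and let $r \colon \G[2] \to F_2$ be the projection coming from the splitting $\G[2]\cong F_2\times\Z_2$ established in the Claim of Section~\ref{sec:virt}. Then $\Pi := \ker(r\circ q)$ contains $J = \ker q$, and $\Pi/J \cong \ker r \cong \Z_2$, so $\Pi$ is an index-$2$ subgroup of $K_4$ containing $J$ as a finite-index (index-$2$) normal subgroup; this already gives $\Pi \unrhd_f J$. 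Moreover $K_4/\Pi \cong F_2$, so once we know $\Pi$ is free, $K_4$ becomes an $F_?$-by-$F_2$ group with fiber $\Pi$.

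The main content is therefore to show $\Pi$ is free, and then to compute its rank. For freeness, I would argue that $\Pi$ is a subgroup of the free-by-cyclic (in fact free-by-$\Z_2$) picture: since $J \cong F_5$ is free and $\Pi/J \cong \Z_2$ is finite, $\Pi$ is virtually free; but being virtually free is not enough, so instead I would exhibit $\Pi$ directly as the kernel of a map to $F_2$ from a group known to be free-by-free. Concretely, recall from Equation~(\ref{eq:newquot}) that each strand-forgetting $\theta_i \colon K_4 \to K_3 \cong F_2$ has free kernel $F_3$; the cleanest route is to realize $r\circ q \colon K_4 \to F_2$ as (conjugate/equivalent to, or at least cohomologically comparable with) one of these, OR — and this is the approach I expect to use — to invoke the geometric identification $K_4 \cong \op{PMod}(S_{0,5})$ and recognize $r \circ q$ as a point-forgetting map $\op{PMod}(S_{0,5}) \to \op{PMod}(S_{0,4}) \cong F_2$ in a Birman exact sequence, whose kernel is $\pi_1$ of a four-punctured sphere, hence free of rank $3$. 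Alternatively, purely algebraically: $\Pi$ is torsion-free (it sits in the torsion-free group $K_4 = P_4/Z(P_4)$, which is torsion-free since $P_4$ is and $Z(P_4)$ is a direct factor) and virtually free, hence free by Stallings--Swan.

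For the rank, I would use Euler characteristic. We have $\chi(K_4)$ computable from $\chi(P_4)$ (or directly: $K_4$ is $F_3$-by-$F_2$ via any $\theta_i$, so $\chi(K_4) = \chi(F_3)\chi(F_2) = (-2)(-1) = 2$); then $\chi(\Pi) = [K_4:\Pi]\,\chi(K_4) = 2\cdot 2 = 4$, wait — this must be reconciled with $\Pi$ being free and with $1 \to \Pi \to K_4 \to F_2 \to 1$ forcing $\chi(\Pi) = \chi(K_4)/\chi(F_2) = 2/(-1) = -2$, so $\Pi \cong F_3$. (The two Euler-characteristic computations must agree; the index-$2$ count $\chi(\Pi)=2\chi(K_4)$ is the one to trust directly, and $2\chi(K_4)=-2$ requires $\chi(K_4)=-1$, consistent with $K_4 \cong \op{PMod}(S_{0,5})$ having $\chi = -1$.) So rank $\Pi = 1 - \chi(\Pi) = 3$, giving $\Pi \cong F_3$ and the asserted $F_3$-by-$F_2$ structure. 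Finally, $J \cong F_5 \leq_f \Pi \cong F_3$ is consistent since $[\Pi:J]=2$ forces $1-5 = 2(1-3)$, i.e. $-4 = -4$, which checks out.

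The step I expect to be the main obstacle is establishing freeness of $\Pi$ cleanly, i.e. ruling out that $\Pi$ is merely virtually free: the quick fix is Stallings--Swan once torsion-freeness and virtual freeness are in hand, so the real work is pinning down the torsion-freeness of $K_4$ (immediate from $P_4 \cong K_4 \times Z(P_4)$ with $P_4$ torsion-free) and then being careful that $r \circ q$ is genuinely surjective onto $F_2$ — which follows since $q$ is surjective and $r$ is surjective — so that the index and the quotient $K_4/\Pi \cong F_2$ are as claimed. Everything else is bookkeeping with the exact sequences already assembled in the excerpt.
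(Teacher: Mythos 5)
Your overall strategy is sound and reaches the correct conclusion, but it diverges from the paper in the freeness step and contains a genuine error in the rank step that you must repair. For freeness, the paper argues that $\Pi$ is finitely presented (being an index-$2$ overgroup of $J\cong F_5$) and is a normal subgroup of infinite index in $K_4$, a group of cohomological dimension $2$, so Bieri's Theorem B applies. Your alternative --- $\Pi$ is torsion-free (as a subgroup of $K_4$, which is torsion-free since $P_4\cong K_4\times Z(P_4)$) and virtually free (it contains $F_5$ with index $2$), hence free --- is valid and arguably more elementary: by Serre's theorem a torsion-free group commensurable with a free group has cohomological dimension $1$, and Stallings--Swan then gives freeness. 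What Bieri's theorem buys the paper is that it needs neither torsion-freeness nor the a priori knowledge that $J$ is free of \emph{finite} index in $\Pi$, only that $\Pi$ is $FP_2$ and normal of infinite index.

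The error is in your Euler characteristic bookkeeping. You write that $\Pi$ is ``an index-$2$ subgroup of $K_4$'' and apply $\chi(\Pi)=[K_4:\Pi]\,\chi(K_4)$ with $[K_4:\Pi]=2$; but $\Pi=\ker(K_4\to F_2)$ has \emph{infinite} index in $K_4$ (the quotient is $F_2$), so that formula does not apply at all. The index-$2$ relation is between $J$ and $\Pi$, not between $\Pi$ and $K_4$. You then resolve the resulting contradiction by ``trusting'' the inapplicable formula and concluding $\chi(K_4)=-1$; this is false. Since $K_4$ is $F_3$-by-$F_2$ via any $\theta_i$, one has $\chi(K_4)=\chi(F_3)\chi(F_2)=2$ (consistent with $\chi(\op{PMod}(S_{0,5}))=\chi(\mathcal{M}_{0,5})=2$, not $-1$), exactly as you first computed. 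The correct route --- which you also wrote down --- is the multiplicativity of $\chi$ in the extension $1\to\Pi\to K_4\to F_2\to 1$, giving $\chi(\Pi)=\chi(K_4)/\chi(F_2)=-2$ and hence rank $3$; the finite-index formula should instead be applied to $J\leq\Pi$, yielding the consistency check $\chi(J)=2\chi(\Pi)=-4=1-5$. With that branch deleted and the extension computation retained, your proof is complete and matches the paper's rank argument.
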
 
\begin{proof} As $K_4$ is free-by-free, it has cohomological dimension $2$. The kernel $\Pi$ contains $J$ as subgroup of index $2$, hence it is finitely presented, and it is a normal subgroup of infinite index in a group of cohomological dimension $2$. By \cite[Theorem B]{Bi76} it is therefore free, and as $\chi(K_4) = 2$, its rank must equal $3$.  \end{proof}

 In order to continue, we need to better understand the isomorphism between $B_4/Z(B_4)$ and a subgroup of $\op{Mod}(S_{0,5})$. We will recall some details of this isomorphism, referring the reader to \cite[Section 1.3]{BB05} or \cite[Section 1.9]{FM12} for a more thorough discussion. We will focus on the case of the braid group with $4$ strands, but what we write applies to any number of strands. To start, there is an isomorphism $B_4 \cong \op{Mod}(D_4)$, the mapping class group of a $4$-punctured closed disk $D$, fixing the boundary pointwise and the punctures set-wise. The elements of the latter, thought of as isotopy classes of selfhomeomorphism of the disk, are nullisotopic. The graph of the nullisotopy (thought of as a map from $D \times I$ to $D$ which is constant on the boundary of the disk) carries the set of punctures in the interior of the disk back to itself, defining this way a braid in which each puncture traces a braid's strand. Next, the generator of the center $Z(B_4)$ of the braid group is identified with the class, in $\op{Mod}(D_4)$, represented by a Dehn twist about the boundary of $D$. Let $S_{0,5}$ be the sphere obtained by capping $D_4$ with a once-punctured disk; this way $S_{0,5}$ acquires a distinguished puncture, that we denote with the symbol $\ast$, arising from the capping operation. The inclusion-induced homomorphism from $\op{Mod}(D_4)$ to $\op{Mod}(S_{0,5})$ has kernel equal to the cyclic group generated by that Dehn twist, and image equal to the (index $5$) subgroup $\op{Mod}(S_{0,4},\ast)$ of $\op{Mod}(S_{0,5})$ fixing the distinguished puncture (see \cite[Section 1.4.2.5]{FM12}). This informs us that the aforementioned isomorphism of $B_4/Z(B_4)$ with a subgroup of $\op{Mod}(S_{0,5})$ bundles separately the distinguished puncture $\ast$ of $S_{0,5}$ and the remaining four punctures, with the latter corresponding with the four strands of the braid group. Now, it is not hard to convince oneself that the epimorphisms $\xi \colon B_4/Z(B_4) \cong \SA2 \to \op{S}_4$  and the epimorphism from $\op{Mod}(S_{0,5})$ to $\op{S}_{5}$ determined by the permutation of {\em all $5$} punctures fit in the commutative diagram
\begin{equation} \xymatrix@R0.5cm{
1 \ar[r] & K_4 \ar[d]_{\cong} \ar[r]  & \SA2
\ar[d]^{\cong} \ar[r]^{\hspace*{12pt} \xi} & \op{S}_4 \ar[d]^{\cong} \ar[r] & 1 & \\
1\ar[r] &
\op{PMod}(S_{0,5})  \ar[r]\ar[dr]_{\cong} &
\op{Mod}(S_{0,4},\ast) \ar[r] \ar[dr]  & \op{S}_4 \ar[r] \ar[dr] & 1 & \\
  & 1 \ar[r] & \op{PMod}(S_{0,5})  \ar[r] &  \op{Mod}(S_{0,5}) \ar[r] & \op{S}_5 \ar[r] & 1} 
\end{equation}
(See e.g. \cite[Equation (2.8)]{MN22}.) Now $\op{Mod}(S_{0,4},\ast)$ admits a point-forgetting Birman sequence, as in \cite[Theorem 1.4.6]{FM12}, giving
\begin{equation} \label{eq:birast} 1 \longrightarrow \pi_1(S_{0,4},\ast) \longrightarrow \op{Mod}(S_{0,4},\ast) \stackrel{\beta}{\longrightarrow} \op{Mod}(S_{0,4}) \longrightarrow 1 \end{equation}
This endows $\SA2$ of a normal subgroup $\pi_1(S_{0,4},\ast) \cong F_3 \unlhd \SA2$. At first glance, one may guess that it may be a (normal, index $2$) subgroup of $\IA2 \cong F_2$, but we can quickly dispel that guess: as we have previously discussed, the image of $\IA2$ in $\op{S}_4$ is the Klein subgroup $V_4 \unlhd \op{S_4}$, while the image of the point-pushing homeomorphisms $\pi_1(S_{0,4},\ast) \unlhd \op{Mod}(S_{0,4},\ast)$ in $\op{S}_4$ is necessarily trivial, as point-pushing homeomorphisms cannot move punctures. (Stated otherwise, $\pi_1(S_{0,4},\ast)$ is contained in the pure mapping class group.) However, we will prove that the guess is {\em almost} true. In order to do so, it will be convenient to refer to Artin's presentation of the pure braid group, which induces (much as in Equation (\ref{eq:presaut})) a presentation for $K_4$:  Artin's presentation  shows that $P_n$ is generated by the $n  \choose 2$ generators \[ A_{pq} := \sigma_{q-1} \ldots \sigma_{p+1} \sigma^2_p \sigma^{-1}_{p+1} \ldots \sigma^{-1}_{q-1}, \,\ 1 \leq p < q \leq n; \] for $n=4$ this gives six generators. 

\begin{lemma} \label{lem:fifth} The subgroup $\pi_1(S_{0,4},\ast) \unlhd K_4 \unlhd_f \SA2$ has image $\Z_2 \unlhd  \G[2] \fin SL_2(\Z)$ under the map $\psi \colon \SA2 \to SL_2(\Z)$. Consequently, $\pi_1(S_{0,4},\ast)$ coincides with the $F_3$-subgroup $\Pi \unlhd K_4$ defined in Lemma \ref{lem:newf3} and $\Pi \cap \op{Inn}(F_2) = J \unlhd K_4$. 
\end{lemma}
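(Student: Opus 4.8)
The plan is to compute the image under $\psi$ of a generating set of $\pi_1(S_{0,4},\ast)$, and then read off the two remaining assertions by a short bookkeeping argument. First I would recall that, by construction, $\pi_1(S_{0,4},\ast)$ is the image under the point–pushing homomorphism $\op{Push}$ of the abstract group $\pi_1(S_{0,4},\ast)\cong F_3$, which is generated by loops $x_1,\dots,x_4$ each encircling one of the four punctures $p_1,\dots,p_4$ (with $x_1x_2x_3x_4=1$). By the Birman point–pushing formula, $\op{Push}(x_i)=T_{a_i}T_{b_i}^{-1}$, where $a_i,b_i$ are the two boundary curves of an annular neighbourhood of $x_i$: the curve $a_i$ bounds a disk carrying only the single marked point $p_i$, so $T_{a_i}=1$, while $b_i$ is a simple closed curve separating $\{p_i,\ast\}$ from the other three punctures. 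Under the isomorphism $\op{Mod}(S_{0,4},\ast)\cong B_4/Z(B_4)=\SA2$ — in which $\ast$ is precisely the puncture obtained by capping the $4$–punctured disk $D_4$, i.e. "the point at infinity" — the curve $b_i$ is isotopic to a curve in $D_4$ enclosing exactly the three strands different from $i$. Hence $T_{b_i}$ is conjugate in $B_4$ to the full twist $(\sigma_1\sigma_2)^3$ on three strands, i.e. to a generator of the centre of a standard $B_3\leq B_4$; in particular $\op{Push}(x_i)$ is represented by a pure braid, consistent with $\pi_1(S_{0,4},\ast)\leq K_4$.

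Next I would compute $\psi(T_{b_i})$. The epimorphism $\psi\colon\SA2\to SL_2(\Z)$ is induced, after passing to quotients by centres, by the composite $B_4\stackrel{\Psi}{\longrightarrow}B_3\to SL_2(\Z)$, where $\Psi$ is the Cardano–Ferrari map ($\sigma_1,\sigma_3\mapsto\bar\sigma_1$, $\sigma_2\mapsto\bar\sigma_2$) and $B_3\to SL_2(\Z)$ is the universal central extension; indeed both maps have kernel $\op{Inn}(F_2)$, and since $-I$ is the unique element of order $2$ in the centre of $SL_2(\Z)$ any resulting ambiguity by an automorphism of $SL_2(\Z)$ is immaterial here. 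Now $\Psi\big((\sigma_1\sigma_2)^3\big)=(\bar\sigma_1\bar\sigma_2)^3$ generates $Z(B_3)$ and therefore maps to $-I\in SL_2(\Z)$, and because $-I$ is central every conjugate of $(\sigma_1\sigma_2)^3$ maps to $-I$ as well. Thus $\psi(T_{b_i})=-I$, hence $\psi(\op{Push}(x_i))=(-I)^{-1}=-I$ for every $i$, and consequently
\[ \psi(\pi_1(S_{0,4},\ast))=\big\langle\psi(\op{Push}(x_1)),\dots,\psi(\op{Push}(x_4))\big\rangle=\{\pm I\}. \]
Since $\G[2]\cong F_2\times\Z_2$ with $-I\equiv I\bmod 2$, the group $\{\pm I\}$ is exactly the torsion subgroup $\Z_2\unlhd\G[2]\fin SL_2(\Z)$, which proves the first assertion.

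For the remaining identifications I would argue as follows. Because $\psi(\pi_1(S_{0,4},\ast))=\Z_2$ lies in $\G[2]$ and $\ker(\psi|_{K_4})=J$, we get $\pi_1(S_{0,4},\ast)\subseteq\psi^{-1}(\Z_2)\cap K_4=\ker\!\big(K_4\stackrel{\psi}{\to}\G[2]\to\G[2]/\Z_2\cong F_2\big)=\Pi$, the free group of rank $3$ of Lemma~\ref{lem:newf3}. Both $\pi_1(S_{0,4},\ast)$ and $\Pi$ are normal in $K_4$ with quotient a free group of rank $2$ — for the first it is $\op{PMod}(S_{0,4})\cong F_2$ (the point–forgetting quotient), for the second it is $\G[2]/\Z_2\cong F_2$ — so the natural surjection $K_4/\pi_1(S_{0,4},\ast)\twoheadrightarrow K_4/\Pi$ is a surjection $F_2\twoheadrightarrow F_2$, hence an isomorphism since $F_2$ is Hopfian; therefore $\pi_1(S_{0,4},\ast)=\Pi$. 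Finally, since $J\leq\Pi\leq K_4$, we obtain $\Pi\cap\op{Inn}(F_2)=\Pi\cap\ker(\psi\colon\SA2\to SL_2(\Z))=\Pi\cap J=J$, as claimed.

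The hard part will be Step~1–Step~2: tracking the capping identification carefully enough to be sure that $\op{Push}(x_i)$ really is a twist about a curve enclosing the three \emph{other} strands (so that the first boundary twist is genuinely trivial and the second is $(\sigma_1\sigma_2)^3$ up to conjugacy), and making sure the two a priori different descriptions of $\psi$ — the one of Section~\ref{sec:virt} and the one through $\Psi$ — agree to the extent needed (which, as noted, is forced because $-I$ is the characteristic central involution). Once this geometric input is pinned down, Steps~3–4 are a direct computation and the closing bookkeeping is formal.
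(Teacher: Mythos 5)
Your proof is correct and its overall skeleton matches the paper's: compute the $\psi$-image of the point-pushing generators, deduce $\pi_1(S_{0,4},\ast)\leq\Pi$, force equality by a rank argument, and read off $\Pi\cap\op{Inn}(F_2)=J$. The genuine difference is in the image computation. The paper writes the three point-pushes explicitly as braid words ($\ell_4=A_{14}A_{24}A_{34}$, $\ell_3=A_{13}A_{23}A_{34}$, $\ell_2=A_{12}A_{23}A_{24}$) and multiplies the $2\times 2$ matrices tabulated in the Appendix, obtaining $-I$ three times. You instead note that $\op{Push}(x_i)=T_{b_i}^{-1}$ with $b_i$ enclosing the three strands other than $i$, so $T_{b_i}$ is conjugate in $B_4$ to the full twist $(\sigma_1\sigma_2)^3$; since $\psi$ factors through the Cardano--Ferrari map $\Psi$ (this is the front face of the large commutative diagram earlier in Section 4, so you are entitled to it) and $\Psi\bigl((\sigma_1\sigma_2)^3\bigr)$ generates $Z(B_3)$, hence maps to the central involution $-I$ of $SL_2(\Z)$, centrality absorbs the conjugation and every generator goes to $-I$; as you observe, the residual ambiguity by an automorphism of $SL_2(\Z)$ is harmless because $-I$ is characteristic. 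Your route avoids the Appendix and the explicit braid-word identifications entirely, and it explains a priori why all three matrices in the paper's computation come out equal (indeed $\ell_4=A_{14}A_{24}A_{34}$ is inverse to $(\sigma_1\sigma_2)^3=A_{12}A_{13}A_{23}$ modulo $Z(P_4)$, reconciling the two descriptions); what it costs is the need to pin down the capping identification and the Birman twist formula carefully, which you rightly flag as the delicate point. For the equality $\pi_1(S_{0,4},\ast)=\Pi$, the paper uses that a finitely generated normal subgroup of infinite index in a free group cannot exist (so the index is finite) together with a rank count, while you use Hopficity of $F_2$ applied to $K_4/\pi_1(S_{0,4},\ast)\cong\op{PMod}(S_{0,4})\cong F_2\onto K_4/\Pi$; both are valid, and the closing intersection step is identical.
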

\begin{proof} In order to prove the first part of the statement, we need to study the action of the elements of $\pi_1(S_{0,4},\ast)$ on the homology of $\op{Inn}(F_2) \cong F_2 = \langle \s1\u3,\s2\s1\u3\u2 \rangle$. We can choose, as generators of the point-pushing subgroup, (the based homotopy class of) three disjoint loops $\ell_i, \, 2 \leq i \leq 4$ in $S_{0,5}$ based at the distinguished puncture $\ast$ and circling once counterclockwise one and only one of three of the remaining puntures, say $p_4,p_3,p_2$ (the loop around of the fourth puncture $p_1$ is then the inverse of the product of the other three).  Now a 
point-pushing homeomorphism can be described, in terms of Dehn twists (see \cite[Section 1.4.2.2]{FM12}), as product of {\em the Dehn twist along the simple closed curve obtained by pushing the loop off itself to the right} (defined with respect of the orientation of the loop), towards the puncture $p_i$, and {\em the inverse of the Dehn twist along the simple closed curve obtained by pushing the loop off itself to the left}, away from the puncture $p_i$. The first simple closed curve bounds a puntured disk (with puncture $p_i$), and is nullhomotopic to the puncture, hence the Dehn twist along that curve is nullisotopic. The second simple closed curve, instead, bounds a disk containing the puncture $p_i$ and the distinguished puncture $\ast$. Based on that, the inverse of the Dehn twist has the effect of fully braiding the  traces of the puncture $p_i$ and of the distinguished puncture $\ast$, with a {\em counterclockwise} motion (i.e. the inverse of the standard generator of the pure braid group on two strands). Figure (\ref{fig:sphere}) shows the strand corresponding to the puncture $p_3$, as it winds counterclockwise around the distinguished puncture $\ast$. 
\begin{figure}[h!]
  \includegraphics[scale=0.35]{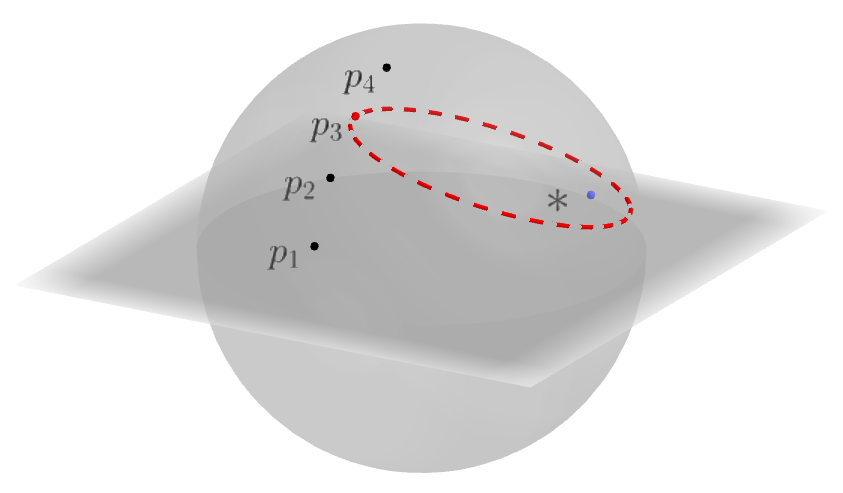}
  \caption{The strand traced by $p_3$ as it winds around $\ast$.}
  \label{fig:sphere}
\end{figure}
In this sense, we can think of Figure (\ref{fig:sphere}) as describing level sets of the graph of a nullisotopy of $S_{0,5}$ fixing pointwise all but the $3$rd puncture, with the $4$ strands arising from the image of the punctures $p_i$ along the time coordinate, with three strands constant and the dashed strand representing the motion of $p_3$. (The full graph could be described using a spherical shell whose radial coordinate represents time.)

Next, sliding the strand corresponding to the punture $p_i$ along $S_{0,5}$ has the effect of having the $i$-th strand wind in clockwise direction around the other three strands. See Figure (\ref{fig:threebraids}) for a description of the resulting braids for $i=4$, $i=3$ and $i=2$ respectively, composing bottom-to-top.
\begin{figure}[h!] 
\begin{center}
\begin{tikzpicture}[scale=1.0] 
\pic[
xscale=-1,
rotate=180,
braid/.cd,
number of strands=4,
every strand/.style={ultra thick},
strand 4/.style={red},
strand 3/.style={black},
strand 2/.style={black},
strand 1/.style={black}
] at (0,0) {braid={s_3^{-1} s_2^{-1} s_1^{-1} s_1^{-1} s_2^{-1} s_3^{-1}}};
\node[below right,yshift=-2mm,scale=0.98] at (-1.2,0) {$\ell_4 = \s3\s2\s1^2\s2\s3=A_{14}A_{24}A_{34}$};
\pic[
xscale=-1,
rotate=180,
braid/.cd,
number of strands=4,
every strand/.style={ultra thick},
strand 4/.style={black},
strand 3/.style={red},
strand 2/.style={black},
strand 1/.style={black}
] at (5.2,0) {braid={s_2^{-1} s_1^{-1} s_1^{-1} s_2^{-1} s_3^{-1} s_3^{-1}}};
\node[below right,yshift=-2mm,scale=0.98] at (4.2,0) {$\ell_3 = \s2\s1^2\s2\s3^2 = A_{13}A_{23}A_{34}$};
\pic[
xscale=-1,
rotate=180,
braid/.cd,
number of strands=4,
every strand/.style={ultra thick},
strand 4/.style={black},
strand 3/.style={black},
strand 2/.style={red},
strand 1/.style={black}
] at (10.4,0) {braid={s_1^{-1} s_1^{-1} s_2^{-1} s_3^{-1} s_3^{-1} s_2^{-1}}};
\node[below right,yshift=-2mm,scale=0.98] at (9.2,0) {$\ell_2 = \s1^2\s2\s3^3\s2 = A_{12} A_{23} A_{24}$};
\end{tikzpicture}
\caption{Braids corresponding to point-pushing maps.}
\label{fig:threebraids}
\end{center}
\end{figure}

The element $\ell_i$, thought of as an element of $\op{Mod}(S_{0,4},\ast)$, is determined by the choice of any braid of $B_4 \cong \op{Mod}(D_4)$ which maps to $\ell_i$ under the epimorphism $\op{Mod}(D_4) \to \op{Mod}(S_{0,4},\ast) \cong \SA2$. 
Any of these choices differ by a central element of the braid group. Stated differently, the word in the $\sigma_j$'s (or in the $A_{pq}$'s) describing $\ell_i$ can be thought of as describing an element in $\SA2$, via the presentation in Equation (\ref{eq:presaut}), or the similar presentation for $K_4$. 

Here are the details of the equivalence of the two presentations for the $\ell_i$'s:
\begin{equation} \label{eq:twopres}
\begin{split}
\ell_4 & = \s3\s2\s1^2\s2\s3 = \s3\s2\s1^2\u2\u3\s3\s2^2\u3\s3^2 = A_{14} A_{24} A_{34} \\
\ell_3 & = \s2\s1^2\s2\s3^2 = \s2\s1^2 \u2\s2^2\s3^2 = A_{13}A_{23}A_{34} \\
\ell_2 & = \s1^2\s2\s3^2\s2 = \s1^2 \s2\s3^2\s2\s3\u3 = \s1^2 \s2\s3\underbrace{\s3\s2\s3}\u3 = \s1^2 \s2 \mathrlap{\overbrace{\phantom{\s3\s2\s3}}}
\s3\underbrace{\s2\s3\s2}\u3 = \\
& =  \s1^2 \s2 \overbrace{\s2\s3\s2} \s2\u3 = \s1^2 \s2^2 \s3 \s2^2 \u3 = A_{12}A_{23}A_{24} 
\end{split} 
\end{equation}

At this point the action of $\ell_i$ on $\op{Inn}(F_2)$ is determined by the conjugation action of the braid identified in Equation (\ref{eq:twopres} on the subgroup $\langle \s1\u3,\s2\s1\u3\u2 \rangle \unlhd B_4$. Using the formulae for the image of the $A_{pq}$'s in $SL_2(\Z)$ determined in the Appendix we get
\begin{align*} & \psi(\ell_4) =   \begin{pmatrix} -1 & 2 \\ -2 & 3 \end{pmatrix}  \begin{pmatrix} 1 & 2 \\ 0 & 1 \end{pmatrix} \begin{pmatrix} 1 & 0 \\ -2 & 1 \end{pmatrix} = \begin{pmatrix} -1 &  0 \\ 0 & -1 \end{pmatrix} \\ 
& \psi(\ell_3) =   \begin{pmatrix} -3 & 2 \\ -8 & 5 \end{pmatrix}  \begin{pmatrix} -1 & 2 \\ -2 & 3 \end{pmatrix} \begin{pmatrix} 1 & 0 \\ -2 & 1 \end{pmatrix} = \begin{pmatrix} -1 & 0  \\ 0 & -1 \end{pmatrix} \\ 
& \psi(\ell_2) =   \begin{pmatrix} 1 & 0 \\ -2 & 1 \end{pmatrix}  \begin{pmatrix} -1 & 2 \\ -2 & 3 \end{pmatrix} \begin{pmatrix} 1 & 2 \\ 0 & 1 \end{pmatrix}  = \begin{pmatrix} -1 &  0 \\ 0 & -1 \end{pmatrix}
\end{align*} which shows that the image of the point-pushing subgroup in $SL_{2}(\Z)$, being torsion, is in fact necessarily the $\Z_2$ subgroup of the principal congruence subgroup $\G[2]$. To complete the prove of the statement, notice that as $\pi_1(S_{0,4},\ast)$ is normal in $K_4$ and it is contained in the kernel $\Pi$ of the epimorphism $K_{4} \to \G[2] \cong F_2 \times \Z_2 \to F_2$, it is a normal subgroup of $\Pi$. Finitely generated normal subgroups of free groups have finite index, and as the ranks coincide we must have $\Pi \cong \pi_1(S_{0,4},\ast)$. Finally, the intersection $\Pi \cap \op{Inn}(F_2) \unlhd K_4$ can be thought of as the collection of elements of $K_4$ that have trivial image under $\psi \colon K_4 \to \G[2]$, and that's exactly $J \unlhd K_4$.
\end{proof}

It is worth mentioning that Lemma \ref{lem:fifth} establishes the relationship between the kernel of the Birman sequence of Equation (\ref{eq:birast}) for  $\op{Mod}(S_{0,4},\ast)$ with the kernel of the Birman sequence for $\op{PMod}(S_{1,2})$. In fact, as we mentioned before we have an isomorphism $\op{Mod}(S_{0,4},\ast) \cong \SA2 \cong 
\op{PMod}(S_{1,2})$. If we denote by $q_1,q_2$ the punctures on the torus, we have {\em two} Birman sequences associated with a choice of either puncture: 
\begin{equation} 1 \longrightarrow \pi_1(S_{1,1},q_i) \longrightarrow \op{PMod}(S_{1,2}) \stackrel{\beta_i}{\longrightarrow} \op{Mod}(S_{1,1}) \longrightarrow 1 \end{equation} 
\begin{figure}[h!]
  \includegraphics[scale=0.35]{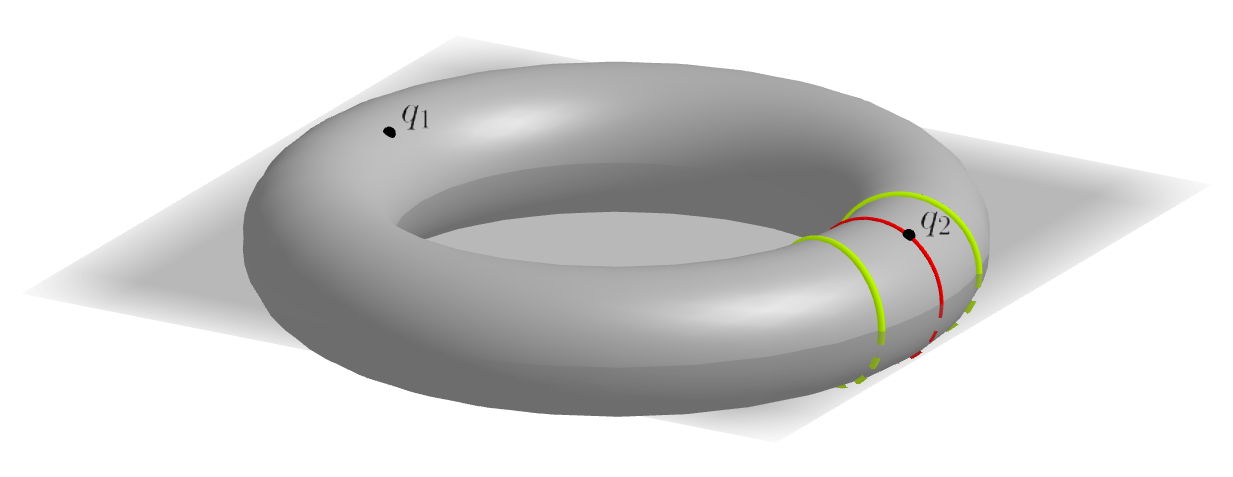}
  \caption{The simple closed curves determining a point-pushing homeomorphism.}
  \label{fig:torus}
\end{figure}
The relation between $\ker{\b_i} \cong F_2$ and $\Pi$ is described in the following:

\begin{corollary} The subgroups $\pi_1(S_{1,1},q_i) \unlhd \op{PMod}(S_{1,2})$, $i=1,2$ coincide, and the isomorphisms  $
\op{PMod}(S_{1,2}) \cong \SA2$, $\op{Mod}(S_{0,4},\ast) \cong \SA2$ map $\pi_1(S_{1,1},q_i)$ to $\op{Inn}(F_2)$ and $\pi_1(S_{0,4},\ast)$ to $\Pi$ respectively.
\end{corollary}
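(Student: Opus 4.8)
The plan is to observe that the second assertion --- that $\pi_1(S_{0,4},\ast)$ is carried to $\Pi$ --- is precisely the conclusion of Lemma~\ref{lem:fifth}, so the only new content is the first one, which I would split into two steps: (a) the subgroups $\pi_1(S_{1,1},q_1)$ and $\pi_1(S_{1,1},q_2)$ coincide inside $\op{PMod}(S_{1,2})$, and (b) this common subgroup is carried to $\op{Inn}(F_2)$ under the isomorphism $\op{PMod}(S_{1,2})\cong\SA2$.

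For step (a) I would invoke the exceptional fact that forgetting (equivalently, filling in) the puncture of a once-punctured torus induces an \emph{isomorphism} $\op{Mod}(S_{1,1})\xrightarrow{\cong}\op{Mod}(S_1)\cong SL_2(\Z)$ onto the mapping class group of the closed torus $S_1$ --- the image of the point-pushing map being trivial in this case. Consequently, for $i=1,2$, the composite of $\beta_i\colon\op{PMod}(S_{1,2})\to\op{Mod}(S_{1,1})$ with this isomorphism has the same kernel as $\beta_i$, namely $\pi_1(S_{1,1},q_i)$; but that composite is just the map induced by forgetting \emph{both} marked points, and this does not depend on the order in which the two punctures are forgotten. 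Hence $\pi_1(S_{1,1},q_1)=\pi_1(S_{1,1},q_2)=:N=\ker\bigl(\op{PMod}(S_{1,2})\to\op{Mod}(S_1)\cong SL_2(\Z)\bigr)$.

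For step (b) I would make the isomorphism of \cite{DFG82,BW24} explicit: a pure mapping class of the torus with the two marked points $q_1,q_2$ fixes $q_1$, hence acts on $\pi_1(S_{1,1},q_1)\cong F_2$ (the fundamental group of the once-punctured torus obtained by deleting $q_2$, based at $q_1$), and by \cite{DFG82,BW24} this action realizes the isomorphism onto $\op{Aut}^+(F_2)=\SA2$. Under this identification the forgetful map dropping $q_1$ becomes the canonical projection $\op{Aut}^+(F_2)\onto\op{Out}^+(F_2)=\SO2\cong\op{Mod}(S_{1,1})$, whose kernel --- generated by the homeomorphisms that push the basepoint $q_1$ along loops --- acts on $\pi_1(S_{1,1},q_1)$ by conjugation by the class of the pushing loop (see \cite{FM12}). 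Thus $\ker\beta_1=\pi_1(S_{1,1},q_1)$ maps onto $\op{Inn}(F_2)$, and being free of rank $2$ it maps isomorphically onto $\op{Inn}(F_2)\cong F_2$; by step (a) the same holds for $\pi_1(S_{1,1},q_2)$. Combined with Lemma~\ref{lem:fifth}, this gives the corollary.

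The step I expect to be delicate is (b): one must verify that the isomorphism $\op{PMod}(S_{1,2})\cong\SA2$ named in the statement is the one of \cite{BW24} for which the Birman sequence $1\to\pi_1(S_{1,1},q_i)\to\op{PMod}(S_{1,2})\xrightarrow{\beta_i}\op{Mod}(S_{1,1})\to1$ is identified with the canonical sequence $1\to\op{Inn}(F_2)\to\op{Aut}^+(F_2)\to\op{Out}^+(F_2)\to1$. This compatibility is implicit in the construction of \cite{BW24}, but it should be made explicit; once it is in place, everything else is formal.
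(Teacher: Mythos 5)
Your proposal is correct; the second assertion is handled essentially as in the paper (it is Lemma \ref{lem:fifth} together with the fact that the isomorphisms with $\SA2$ are defined by the adjoint action on the respective Birman kernels, on which point-pushing acts by conjugation --- the compatibility you flag as delicate in step (b) is exactly what the paper asserts in its parenthetical remark). For the first assertion, the coincidence of $\pi_1(S_{1,1},q_1)$ and $\pi_1(S_{1,1},q_2)$, your route is genuinely different. The paper argues geometrically: each point-pushing homeomorphism is written as a product of Dehn twists along the curves obtained by pushing the base loop off itself to the left and to the right, and on the twice-punctured torus these curves can be isotoped so as to exchange the roles of the two punctures, so the two point-pushing subgroups share a generating set. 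You instead exploit the degeneracy of the Birman sequence for the torus: filling in the puncture induces an isomorphism $\op{Mod}(S_{1,1})\cong\op{Mod}(S_{1,0})\cong SL_2(\Z)$, so each $\ker\beta_i$ equals the kernel of the single ``forget both punctures'' map $\op{PMod}(S_{1,2})\to\op{Mod}(S_{1,0})$, which is manifestly independent of $i$. Your version is shorter, purely formal once the degenerate Birman sequence for the closed torus is invoked, and it makes transparent why the coincidence is --- as the paper's subsequent remark warns --- an accident of low complexity, hinging on the triviality of point-pushing on the closed torus; the paper's isotopy argument is more hands-on but encodes the same special geometry. I see no gap in your argument.
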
 
\begin{proof} The second part of this corollary is simply a rephrasing of Lemma \ref{lem:fifth} (together with the fact that the isomorphism $\op{PMod}(S_{1,2}) \cong \SA2$ is defined exactly in terms of the adjoint action of $\op{PMod}(S_{1,2})$ on the Birman kernel). What is not included in there is the fact that the choice of the Birman sequence is immaterial, and this is due to the fact that the subgroups of $\op{PMod}(S_{1,2})$ represented by point-pushing homeomorphisms do not depend by the choice of the punctures. (This fact is likely to be well-known to the experts.) In fact, once standard loop representatives for the generators of $\pi_1(S_{1,1},q_i)$ are chosen, the corresponding point-pushing homeomorphisms can be represented by composition of Dehn twists or their inverses along simple closed curves obtained by pushing the base loops (as depicted in Figure (\ref{fig:torus}) for one of the generators) right and left respectively.  
It should be quite clear that we can isotope these curves so that they play (reverting left and right) the role of the simple closed curves obtained by pushing the loop representative based at the other puncture. The point pushing groups have therefore the same generating sets irrespective of the choice of $q_i$.
\end{proof}

Note that the isomorphism of the point-pushing subgroups discussed here is an accident due to the number of punctures and, as we are about to see, fails to hold in general. 

We are in position to state the following:

\secthm*

\begin{proof} Equation \ref{eq:birast} describes $\SA2$ as an extension with fiber $F_3 \cong \pi_1(S_{0,4},\ast)$ (and base $\op{Mod}(S_{0,4})  \cong (\Z_{2} \times \Z_{2}) \rtimes PSL_{2}(\Z)$, see \cite[Proposition 1.2.7]{FM12} for the latter identification). While $\SA2 \cong \op{Mod}(S_{0,4},\ast)$ admits the unique Birman sequence of Equation (\ref{eq:birast}), its finite index subgroup $K_4 \cong \op{PMod}(S_{0,5})$ admits five, adding the point-forgetting ones, where each puncture $p_i$, $1 \leq i \leq 4$ assumes the role of base point. These latter four arise, by the discussion in the proof of Lemma \ref{lem:fifth}, from the strand-forgetting maps $\Theta_i \colon P_4 \to P_3$, see Equation (\ref{eq:newquot}), while the first is related to the Cardano--Ferrari map $\Psi \colon P_4 \to P_3$, see Lemma \ref{lem:newf3}. 

We conclude with the proof that all these $F_3$-by-$F_2$ structures on $K_4$ are inequivalent, in the sense that the fiber groups are distinct normal subgroups. In fact, we will show more, namely that the Frobenius product of any two fiber groups generates the whole $K_4$. This affords a direct (and slightly cumbersome) proof based on the presentation for $K_4$ with generators $A_{pq}$, but we will provide a more direct proof. Recall that the epimorphisms $\Theta_i \colon P_4 \to P_3$ have kernel the free group generated by the three generators $A_{pq}$'s that have one index equal to $i$, and are the identity on the remaining three (up to suitable relabeling), see e.g. \cite{BB05}. The same applies, with the induced presentations, for the epimorphisms $\theta_i \colon K_4 \to K_3$. Now take, say, $\ker{\theta_4} = \langle A_{14},A_{24},A_{34} \rangle$ and $\ker{\theta_3} = \langle A_{13},A_{23},A_{34} \rangle$. We have the diagram
\[ \xymatrix@R0.5cm{
& 1  \ar[d] & 1 \ar[d] & 1 \ar[d] &\\
1 \ar[r] & F_{\omega} \ar[d] \ar[r] &
\langle A_{13}, A_{23}, A_{34} \rangle \ar[d] \ar[r] & K_3 \ar[d] \ar[r] & 1 \\
1\ar[r]&
\langle A_{14},A_{24},A_{34} \rangle  \ar[r]\ar[d] &
K_4 \ar[d]^{\theta_3} \ar[r]^{\theta_4} & K_3 \ar[d] \ar[r]&1\\
1 \ar[r] & K_3 \ar[d]
 \ar[r]& K_3 \ar[r] \ar[d]  &
 1  & \\
&1  & 1 &}
\] 
where surjectivity the map $\theta_4 \colon \langle A_{13}, A_{23}, A_{34} \rangle  \to K_3$ comes from we study the image of the generators: we have 
\begin{align*} 
\theta_4 \colon & A_{13} \mapsto A_{13} & A_{23} & \mapsto  A_{23} & A_{34} \mapsto  1, 
\end{align*} and recalling that 
\[ K_3 = \langle A_{12},A_{13},A_{23}|A_{12}A_{13}A_{23} \rangle \cong F_2 \] 
it is immediate that $\theta_4$ is surjective. Now the index of $\theta_4( \ker \theta_3) \fin K_3$ equals the index of the Frobenius product $\ker \theta_3 \cdot \ker \theta_4 \fin K_4$ (and the latter equals simultaneously the index of $\theta_3( \ker \theta_4) \fin K_3$, namely $1$, $\theta_3$ must be surjective as well).  The same argument applies as well for the remaining pairs of strand-forgetting maps $\theta_i$.
 
Next, consider $\Pi = \ker \b = \langle \ell_2, \ell_3, \ell_4 \rangle$. By symmetry, we expect the same result as above, and in fact we have

\[ \xymatrix@R0.5cm{
& 1  \ar[d] & 1 \ar[d] & 1 \ar[d] &\\
1 \ar[r] & F_{\omega} \ar[d] \ar[r] &
\langle \ell_2, \ell_3, \ell_4 \rangle \ar[d] \ar[r] & K_3 \ar[d] \ar[r] & 1 \\
1\ar[r]&
\langle A_{14},A_{24},A_{34} \rangle  \ar[r]\ar[d] &
K_4 \ar[d]^{\b} \ar[r]^{\theta_4} & K_3 \ar[d] \ar[r]&1\\
1 \ar[r] & K_3 \ar[d]
 \ar[r]& K_3 \ar[r] \ar[d]  &
 1  & \\
&1  & 1 &}
\] 
where surjectivity of the map $\theta_4 \colon \langle \ell_2, \ell_3, \ell_4 \rangle  \to K_3$ comes again from the study of the image of the generators: we have 
\begin{align*} 
\theta_4 \colon & \ell_2 = A_{12}A_{23}A_{24} \mapsto A_{12}A_{23} & \ell_3 = A_{13}A_{23}A_{34} & \mapsto  A_{13}A_{23} & \ell_4 = A_{14}A_{24}A_{34} \mapsto  1 
\end{align*} 
and with this in hand one can explicitly verify that $\theta_4$ is surjective. The rest of the argument follows as above.
\end{proof}

\begin{remark} All the five structure discussed in Theorem \ref{thm:sec} are related by homeomorphisms of $S_{0,5}$ that preserve the punctures set-wise. This observation reconciles, to an extent, the strand-forgetting maps  and the Cardano--Ferrari map from $P_4$ to $P_3$, which at face value seem to bear little relation, showing that they induce, on $K_4$, free-by-free structures related by an automorphism of $K_4$. (The asymmetry in the passage from $P_4 = \op{PMod}(D_4)$ to $\op{PMod}(S_{0,5})$ originates from the aforementioned fact that the capping naturally selects a distinguished puncture on $S_{0,5}$.) 
\end{remark}

\begin{question} Does there exist a sixth $F_3$-by-$F_2$ structure on $K_4$ which does not originate from the constructions above?
\end{question}

For sake of completeness, we want to analyze the relation between the normal subgroups $J$ and $\ker \Theta_i$ of $P_4$: this will bear some similarity and some differences with the result just proven. We have the following:

\begin{proposition} \label{pro:sec} The normal subgroups $J$ and $\ker{\Theta_i}$ of $P_4$ are distinct and their Frobenius products satisfy $J \cdot \Ker \Theta_i = P_4$, while $\ker \Theta_i  \cdot \Ker \Theta_j \unlhd P_4$ have infinite index.  
\end{proposition}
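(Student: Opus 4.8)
The engine of the argument is the elementary identity: for a subgroup $M$ and a normal subgroup $N$ of a group $G$ one has $MN=G$ if and only if $M$ surjects onto $G/N$, and more generally $[G:MN]=[G/N:\overline M]$ where $\overline M$ is the image of $M$ in $G/N$. Applied to the two quotients of $P_4$ in play, this reduces the proposition to statements about images of generators. Distinctness is then immediate and costs nothing: $J=\ker\Psi\cong F_5$ while $\ker\Theta_i\cong F_3$ (the fibre of the strand‑forgetting map), and free groups of distinct ranks are non‑isomorphic; this incidentally shows $J\ne\ker\Theta_i$ for every $i$.

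For the infinite‑index statement, let $\Theta_{\{i,j\}}\colon P_4\onto P_2\cong\Z$ be the epimorphism obtained by forgetting strand $i$ and then forgetting (the image of) strand $j$ --- equivalently, the map induced by the projection $\mathrm{Conf}_4(\mathbb{C})\to\mathrm{Conf}_2(\mathbb{C})$ onto the two coordinates not in $\{i,j\}$, so that it equals also ``forget $j$, then forget $i$''. A braid that becomes trivial after deleting a single strand is a fortiori trivial after deleting two, so both $\ker\Theta_i$ and $\ker\Theta_j$ lie in $\ker\Theta_{\{i,j\}}$; hence $\ker\Theta_i\cdot\ker\Theta_j\le\ker\Theta_{\{i,j\}}$, and since $P_4/\ker\Theta_{\{i,j\}}\cong\Z$ is infinite we conclude, for $i\ne j$, that the (normal) subgroup $\ker\Theta_i\cdot\ker\Theta_j$ has infinite index. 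Equivalently, one computes directly that $\Theta_i(\ker\Theta_j)$ is the subgroup of $P_3$ generated by two of the three standard pure braid generators $A_{pq}$, which is a free direct factor $F_2$ of $P_3\cong F_2\times\Z$ and so has infinite index.

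The substantive claim is $J\cdot\ker\Theta_i=P_4$, which by the identity above and the epimorphism $\Psi\colon P_4\onto P_3$ of the diagram in Equation (\ref{eq:lift}) is equivalent to $\Psi(\ker\Theta_i)=P_3$. Here I would compute $\Psi$ on Artin's pure braid generators $A_{pq}=\sigma_{q-1}\cdots\sigma_{p+1}\sigma_p^2\sigma_{p+1}^{-1}\cdots\sigma_{q-1}^{-1}$, using that $\Psi$ is induced by $\sigma_1,\sigma_3\mapsto\sigma_1$ and $\sigma_2\mapsto\sigma_2$: one reads off $\Psi(A_{12})=A_{12}$, $\Psi(A_{13})=A_{13}$, $\Psi(A_{23})=A_{23}$, $\Psi(A_{34})=\sigma_1^2=A_{12}$, and, after a short reduction using the braid relation $\sigma_1\sigma_2\sigma_1=\sigma_2\sigma_1\sigma_2$, also $\Psi(A_{14})=\sigma_1\sigma_2\sigma_1^2\sigma_2^{-1}\sigma_1^{-1}=\sigma_2^2=A_{23}$ and $\Psi(A_{24})=\sigma_1\sigma_2^2\sigma_1^{-1}=A_{23}^{-1}A_{13}A_{23}$. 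Since $\ker\Theta_i$ is freely generated by the three $A_{pq}$ with $i\in\{p,q\}$ (as recalled in the proof of Theorem \ref{thm:sec}), $\Psi(\ker\Theta_i)$ is generated by the three corresponding images; running through $i=1,2,3,4$, this subgroup always contains $A_{12}$, $A_{23}$ and one of $A_{13}$ or $A_{23}^{-1}A_{13}A_{23}$, hence --- conjugating by $A_{23}$ in the latter case --- contains all of $A_{12},A_{13},A_{23}$ and therefore equals $P_3=\langle A_{12},A_{13},A_{23}\rangle$. One could instead do a single case: $J=F_2\cap P_4$ is normal in $B_4$, being an intersection of normal subgroups, and the four $\ker\Theta_i$ form one $B_4$‑conjugacy class, so it suffices to verify $\Psi(\ker\Theta_1)=\langle A_{12},A_{13},A_{23}\rangle=P_3$.

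The only step that is not pure bookkeeping is the last: identifying $\Psi(A_{14})$ and $\Psi(A_{24})$ \emph{inside $P_3$}. The substitution $\sigma_3\mapsto\sigma_1$ turns these into words in $\sigma_1,\sigma_2$ that do not visibly belong to the pure braid group, and one must massage them --- via the single braid relation --- into $\sigma_2^2$ and into $\sigma_2^{-1}\sigma_1^2\sigma_2$ respectively. This is exactly where the argument goes beyond the insufficient homological observation that $\Psi(\ker\Theta_i)$ surjects onto $H_1(P_3)=\Z^3$: the fact that $\ker\Psi_*$ is spanned by $A_{12}-A_{34}$, $A_{13}-A_{24}$, $A_{14}-A_{23}$ already forces surjectivity onto $H_1$ for every $i$, but surjectivity onto $P_3$ itself requires the group‑level identities above. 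Everything else reduces to the opening identity on products of normal subgroups.
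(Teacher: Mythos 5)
Your proof is correct and follows essentially the same route as the paper: you compute $\Psi$ on the Artin generators $A_{pq}$ (obtaining the same images $\Psi(A_{14})=A_{23}$, $\Psi(A_{24})=A_{23}^{-1}A_{13}A_{23}$, $\Psi(A_{34})=A_{12}$ via the braid relation) and deduce $J\cdot\ker\Theta_i=P_4$ from $\Psi(\ker\Theta_i)=P_3$ using the standard index identity, exactly as the paper does with its $3\times 3$ diagram. The only cosmetic differences are that your primary argument for the infinite index of $\ker\Theta_i\cdot\ker\Theta_j$ goes through containment in the kernel of the two-strand-forgetting map onto $P_2\cong\Z$ (the paper instead computes $\Theta_4(\ker\Theta_3)=\langle A_{13},A_{23}\rangle\leq P_3\cong F_2\times\Z$ directly, which you also note as an alternative), and that you observe the $B_4$-conjugacy of the four $\ker\Theta_i$, which reduces the verification to a single case.
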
 

\begin{proof} We start by describing $\Psi \colon P_4 \to P_3$ and the $\Theta_i \colon P_4 \to P_3$ in terms of Artin generators. Imposing the relation $\sigma_1 = \sigma_3$ on the generators, we can show by explicit computation that $\Psi \colon P_4 \to P_3$ is defined on the generators as \begin{align*} 
\Psi \colon  & A_{12} \mapsto A_{12} & A_{13} & \mapsto A_{13} &  A_{23} \mapsto A_{23}  \\ 
                & A_{14 }\mapsto A_{23} &  A_{24} & \mapsto A_{23}^{-1} A_{13} A_{23}  & A_{34} \mapsto A_{12} 
\end{align*} 
Two of these computations deserve some attention: we have 
\begin{align*}
\Psi(A_{14}) & = \Psi(\s3\s2\s1^2\u2\u3) = \underbrace{\s1\s2\s1}\overbrace{\s1\u2}\u1 = \underbrace{\s2\s1\s2}\overbrace{\u2\u1\s2\s1}\u1 = \s2^2 = A_{23} \\
\Psi(A_{24}) & = \Psi(\s3\s2^2\u3) = \underbrace{\s1\s2}\overbrace{\s2\u1} = \underbrace{\u2\s1\s2\s1}\overbrace{\u1\u2\s1\s2} = \u2 \s1^2 \s2 = \\ 
& = \s2^{-2} \s2\s1^2\u2\s2^2 = A_{23}^{-1} A_{13} A_{23}  
\end{align*}
With this information in hand, we can determine the images $\Psi(\ker{\Theta_i})$. 
For instance, for $i=4$, $\ker{\Theta_4}$ is generated by $A_{14},A_{24},A_{34}$; these generators map, under $\Psi$, into $A_{23},A_{23}^{-1} A_{13} A_{23},A_{12}$ respectively, hence they generate the full $P_3$; the kernel of $\Psi(\Ker\Theta_4) = \ker{\Theta_4} \cap J$ equals therefore the infinitely generated free group $F_{\omega}$ according to the commutative diagram
\[ \xymatrix@R0.5cm{
& 1  \ar[d] & 1 \ar[d] & 1 \ar[d] &\\
1 \ar[r] & F_{\omega} \ar[d] \ar[r] &
\ker{\Psi} \ar[d] \ar[r] & P_3 \ar[d] \ar[r] & 1 \\
1\ar[r]&
\langle A_{14},A_{24},A_{34} \rangle  \ar[r]\ar[d] &
P_4 \ar[d]^{\Psi} \ar[r]^{\Theta_4} & P_3 \ar[d] \ar[r]&1\\
1 \ar[r] & P_3 \ar[d]
 \ar[r]& P_3 \ar[r] \ar[d]  &
 1  & \\
&1  & 1 &}
\] Note that the surjectivity of $\Psi \colon \langle A_{14},A_{24},A_{34} \rangle \to P_3$ is true by inspection, and this entails as before the surjectivity of $\Theta_4 \colon \Ker{\Psi} \to P_3$.

The same argument applies {\em mutatis mutandis} to the other strand-forgetting maps $\Theta_i, i =1,2,3$. 

On the other hand, if we repeat the same argument for (say) $\Theta_3$ and $\Theta_4$, we see that \[ \Theta_4(\ker \Theta_3) = \langle A_{13}, A_{23} \rangle \unlhd P_3 \cong F_2 \times \Z \] and the latter inclusion is necessarily infinite index. (Equivalently, the subgroup of $P_4$ generated by $A_{14},A_{24},A_{34},A_{13},A_{23}$ is necessarily infinite index.) 
\end{proof} 

\begin{remark}
As we showed before, pairwise, $\ker{\Theta_i} \cap \ker{\Psi} = F_{\omega}$. Similarly, for $i \neq j$,  $\ker{\Theta_i} \cap \ker{\Theta_j} = F_{\omega}$ (as discussed in \cite[Section 7.2]{BCWW06}). Given these facts, we can also deduce that collectively $\bigcap_{i=1}^{4} \ker{\Theta_i}$ and $\ker{\Psi} \cap \bigcap_{i=1}^{4} \ker{\Theta_i}$ are infinitely generated free groups because of the nontriviality of the intersection which follows from \cite[Lemma 2.1]{Lo86}. The kernels $\ker{\Theta_i}$ have a clear geometric meaning: they compose the subgroup of $4$-strand pure braids which become trivial removing the $i$-th strand. They intersection is given by the group $\bigcap_{i=1}^{4} \ker{\Theta_i}$ of {\em Brunnian} $4$-strand braids, which satisfy such property for the removal of any strand. (The braid yielding the Borromean rings give the simplest nontrivial example of such class, albeit for $3$ strands).  The geometric meaning of the pure braids in $\ker{\Psi}$ is, to the best of our understanding, less obvious, and so is the meaning of $\ker{\Psi} \cap \bigcap_{i=1}^{4} \ker{\Theta_i}$. 
\end{remark}
\setcounter{secnumdepth}{-1}
\section{Appendix} 
We denote by $a := \s1\u3$ and $b:= \s2\s1\u3\u2$ two generators of the normal subgroup $F_2 \unlhd B_4$. The generators of the group $B_4$ and their inverses act by conjugation on $\langle a,b \rangle$ as follows:
\begin{align*} 
\s1 \colon  & a \mapsto a  & \s2 \colon & a \mapsto b & \s3 \colon & a \mapsto a  \\ 
                & b \mapsto ba^{-1}  & & b \mapsto ba^{-1}b &  & b \mapsto a^{-1}b \\
\u1 \colon  & a \mapsto a  & \u2 \colon & a \mapsto aba^{-1} & \u3 \colon & a \mapsto a  
\\               & b \mapsto ba  & & b \mapsto a &  & b \mapsto ab 
\end{align*} 
The second row deserves some attention, while the rest is either obvious or a straightforward consequence:
\begin{align*} \s1 b \u1 & = \underbrace{\s1\s2\s1}\u3\u2\u1 = \mathrlap{\underbrace{\phantom{\s2\s1\s2}}}
\s2\s1
\mathrlap{\overbrace{\phantom{\s2\u3}}}
\s2\u3\u2\u1 = \s2\s1\overbrace{\u3\u2\s3\s2}\u2\u1 = \\
& =  \s2\s1\u3\u2\s3\u1 = ba^{-1} \\
\s2 b \u2 & = \s2\underbrace{\s2\u3}\s1\u2\u2 = 
\s2 
\mathrlap{\underbrace{\phantom{\u3\u2\s3\s2}}}
\u3\u2\s3
\mathrlap{\overbrace{\phantom{\s2\s1\u2}}} \s2\s1\u2\u2 = 
\s2\u3\u2\s3\overbrace{\u1\s2\s1}\u2 = \\
& = \s2\u3\underbrace{\u2\u1}\s3\s2\s1\u2 = \s2\u3\mathrlap{\underbrace{\phantom{\s1\u2\u1\u2}}}
\s1\u2\u1
\mathrlap{\overbrace{\phantom{\u2\s3}}}\u2\s3\s2\s1\u2 = \\
& = \s2\s1\u3\u2\u1\overbrace{\s3\s2\u3\u2}\s2\s1\u2 = \s2\s1\u3\u2\s3\u1\s2\s1\u3\u2 = b a^{-1}b \\
\s3 b \u3 & = \s3 \underbrace{\s2\s1}\u3\u2\u3 = 
\s3
\mathrlap{\underbrace{\phantom{\u1\s2\s1\s2}}}
\u1\s2\s1
\mathrlap{\overbrace{\phantom{\s2\u3}}}
\s2\u3\u2\u3 = \\ 
& = \s3\u1\s2\s1\overbrace{\u3\u2\s3\s2}\u2\u3 = a^{-1}b 
\end{align*}
Using these results, and a fair amount of patience, allows one to verify that the action of the generators of the pure braid group on $\langle a,b \rangle$ is given by 
\begin{align*} 
A_{12} \colon  & a \mapsto a  & A_{13} \colon & a \mapsto b a^{-2} b a^{-1}  & A_{14} \colon & a \mapsto aba^{-1}ba^{-1}  \\ 
                & b \mapsto ba^{-2}  & & b \mapsto ba^{-2} ba^{-2} ba^{-1} ba^{-2} ba^{-1} &  & b \mapsto b^2 a^{-1}b a^{-1} \\
A_{23} \colon  & a \mapsto ba^{-1}b  & A_{24} \colon & a \mapsto ab^2 & A_{34} \colon & a \mapsto a  
\\               & b \mapsto ba^{-1}ba^{-1}b  & & b \mapsto b &  & b \mapsto a^{-2}b 
\end{align*}
Looking at the induced action of the generators $A_{pq}$ on the homology of $F_2 = \langle a,b\rangle$ one can recast explicitly the image of the generators of the pure braid group in $SL_2(\Z)$, that we write in matrix form using, for sake of notation, the same symbol:
\begin{align*} A_{12} = & \begin{pmatrix} 1 & 0 \\ -2 & 1 \end{pmatrix}  & A_{13} = & \begin{pmatrix} -3 & 2 \\ -8 & 5 \end{pmatrix} & A_{14} = & \begin{pmatrix} -1 & 2 \\ -2 & 3 \end{pmatrix} \\ 
 A_{23} = & \begin{pmatrix} -1 & 2 \\ -2 & 3 \end{pmatrix}  & A_{24} = & \begin{pmatrix} 1 & 2 \\ 0 & 1 \end{pmatrix} & A_{34} = & \begin{pmatrix} 1 & 0 \\ -2 & 1 \end{pmatrix}
\end{align*} which, in terms of the standard generators of $SL_2(\Z)$, namely
\begin{align*}  S = & \begin{pmatrix} 0 & -1 \\ 1 & 0 \end{pmatrix} & T = & \begin{pmatrix} 1 & 1 \\ 0 & 1 \end{pmatrix}
\end{align*}
can be written as
\begin{align*} A_{12} = & (TST)^{-2}  & A_{13} = & S^{-1}T^{-2}S^{-1}TS^{-1}T^{-2}ST^{-1} & A_{14} = & S^{-1}T^{-2}S^{-1}T^{-2} \\ 
 A_{23} = & S^{-1}T^{-2}S^{-1}T^{-2} & A_{24} = & T^2 & A_{34} = & (TST)^{-2}
\end{align*} (For an explanation of how to recover these equalities, see e.g. \cite{Con}.)
The map from $P_4$ to $SL_2(\Z) $ descends to $K_4$, as the center of $P_4$ acts trivially by conjugation on $F_2$.


\end{document}